\definecolor{backgrey}{rgb}{0.86,0.86,0.86}
\definecolor{dblue}{rgb}{0,0.0,0.5}
\definecolor{dred}{rgb}{0.4,0.2,0}
\definecolor{dgreen}{rgb}{0.0,0.5,0}
\newcommand{\captionfonts}{\small}
\long\def\@makecaption#1#2{%
  \vskip\abovecaptionskip
  \sbox\@tempboxa{{\captionfonts #1: #2}}%
  \ifdim \wd\@tempboxa >\hsize
    {\captionfonts #1: #2\par}
  \else
    \hbox to\hsize{\hfil\box\@tempboxa\hfil}%
  \fi
  \vskip\belowcaptionskip}
\newtheorem{theorem}{Theorem}
\newtheorem{assumption}[theorem]{Assumption}
\newtheorem{lemma}[theorem]{Lemma}
\newenvironment{proof}[1][Proof]{\textbf{#1.} }{\ \hspace*{\fill} \rule{0.5em}{0.5em}}
\title{Existence of invariant tori in three dimensional maps with degeneracy}
\author{Umesh Vaidya \footnote{Umesh Vaidya (ugvaidya@iastate.edu)} and Igor Mezi\'{c}
\footnote{Igor Mezi\'{c} (mezic@engr.ucsb.edu)} 
\thanks{U. Vaidya is with the Department of Electrical and
Computer Engineering,
Iowa State University,
Ames, IA}
\thanks{I. Mezi\'{c} is with the Department of Mechanical  Engineering,
University of California at Santa Barbara, Santa Barbara CA}
}
\begin{document}
\maketitle \thispagestyle{empty} \pagestyle{empty}
%



\begin{abstract}
We prove a KAM-type  result for the persistence of two-dimensional
invariant tori in  perturbations of integrable
action-angle-angle maps with degeneracy, satisfying the intersection property.
 Such degenerate action-angle-angle maps arise upon generic perturbation
of three-dimensional volume-preserving vector fields, which
are invariant under volume-preserving action of $S^1$
 when  there is no motion in the group action direction for the unperturbed map.
This situation is analogous to degeneracy in Hamiltonian systems. The
degenerate nature of the map  and the unequal number of action and
angle variables make the persistence proof non-standard.
The persistence of the invariant tori as predicted by
our result has implications for the existence of barriers to
transport  in three-dimensional incompressible fluid flows.
Simulation results indicating  existence of two-dimensional tori in
a perturbation of swirling Hill's spherical vortex flow are presented.
\end{abstract}




\section{Introduction}\label{intro}

The KAM (Kolmogorov-Arnold-Moser) \cite{kol, arnoldsmallden, arnoldsmallden_2,
moserkam} theorem is one of the most important results in the
stability theory of Hamiltonian systems. The theorem asserts that
most of the invariant $n$-tori of $n$ degrees of freedom integrable
Hamiltonian systems will persist under small Hamiltonian
perturbations. Arnold proved this theorem under both
non-degenerate and degenerate assumptions on the unperturbed
Hamiltonian \cite{arnoldsmallden, arnoldsmallden_2}. Moser proved a version of  the theorem for
the perturbation of two dimensional integrable twist map
\cite{SiegelandMoser:1971} (Chapter 3; Section 32). In both of these cases the system is
defined on an even dimensional manifold and has a symplectic
structure.

Extension of the KAM theorem to odd dimensional systems is a
challenging problem, that has many practical applications
\cite{mezicwigg,RLlavebook}. Volume-preserving flows and maps
which arise in the context of fluid dynamics and
magnetohydrodynamics are of odd dimensions. Because of that, these
maps and flows have a looser structure than symplectic maps and
flows. The KAM-type results have been developed for
volume-preserving flows \cite{broer1,broerbook,Delshams} and for
diffeomorphisms which either  preserve volume
\cite{Delshams,chengsun3, Xia, Zhu_Wen,Yong} or satisfy the
intersection property, a relaxed version of volume- preservation
\cite{chengsun1,Xiabook}. The result in this paper differs from the above mentioned references in that we prove the KAM-type result for the degenerate case of three-dimensional volume preserving maps (in fact, more generally for action-angle-angle maps with one degenerate angle and satisfying the intersection property). A KAM-type result for maps with unequal number of actions and angles and with degeneracy of the same type as that considered by us also appears in \cite{Zhu_Wen}. However there are some major differences between the KAM proof that appears in \cite{Zhu_Wen} and the main results of this paper. In particular,
in \cite{Zhu_Wen}, the KAM-type results are proved for the case where the size of the perturbations are assumed to be smaller than the size of the degenerate drift in the angles, whereas in this paper
we assume that both the degenerate drift and the perturbations are of same size. Furthermore the proof in the paper  \cite{Zhu_Wen} achieves their stated result only when an additional - unstated - assumption on the perturbation is used (see section \ref{first} below). Similarly \cite{Yong} prove KAM type result for the case where the unperturbed system consists of arbitrary number of action and angle variables. However the set-up does not consider the case of degenerate angle which is the case discussed in our paper.

%
%
%
%

The degenerate three dimensional volume preserving action-angle-angle map considered in this paper arises in the context of fluid flow problems. %
 The following example from \cite{mezicwigg},
shows how such action-angle-angle maps can arise in three-dimensional
incompressible volume-preserving flows, which are invariant under a
one-parameter symmetry group. Consider the following flow in
cylindrical coordinates.

\begin{eqnarray}
\dot r=r z,\;\;\;\;\;\;\;\; \dot z=1-2 r^2-z^2,\;\;\;\;\;\;
\;\;\dot \theta =\frac{2c}{r^2},\label{example}
\end{eqnarray}
where $c$ is an arbitrary constant. The system preserves the volume form $r dr \wedge dz \wedge d\theta$ \cite{broer_LNM,haller_mezic}.
In the fluid-mechanics context,
$\frac{c}{2}$ is the circulation. The flow (\ref{example}) is a
superposition of the well-known Hill's spherical vortex with a line
vortex on the $z$ axis, which induces the swirl velocity $\dot
\theta=\frac{2c}{r^2}$. The system of equation satisfies Euler's equation
of motion for the an inviscid incompressible fluid everywhere
except on the $z$ axis, where the swirl velocity becomes infinite.
After transforming the first two components into canonical
Hamiltonian form by letting $R=\frac{r^2}{2}$, the system (\ref{example})
becomes
\begin{eqnarray}
\dot R=2Rz,\;\;\;\;\;\;\;\; \dot z=1-4R-z^2,\;\;\;\;\;\;\;\; \dot
\theta=\frac{c}{R}. \label{eqn2}
\end{eqnarray}
This system preserves the volume form $dR \wedge dz \wedge d\theta$  \cite{broer_LNM,haller_mezic} and in the $R-z$ components takes the form

\begin{eqnarray}
\dot R=\frac{\partial H(R,z)}{\partial z},\;\;\;\;\;\;\;\;\;\;\;
\dot z=-\frac{\partial H(R,z)}{\partial R}\label{ham}
\end{eqnarray}

\noindent where $H(R,z)=Rz^2-R+2R^2$ is the Hamiltonian. By first introducing action-angle coordinate with respect to form $dR\wedge dz$, we transform $(R,z)$ to action-angle coordinate i.e., $(R,z)\to (I,\phi_1)$. To obtain action-angle-angle flow, we would need to perform addition transformation on the angle variable $\theta$ to get the second angle variable $\phi_2(\theta,I,\phi_1)$ (for the details of the derivation
refer to \cite{mezicwigg}). Hence we get,
\begin{eqnarray}
\dot I=0,\;\;\;\;\;\;\;\;\;\;\; \dot \phi_1 =
\omega_1(I)\;\;\;\;\;\;\;\;\;\;\; \dot \phi_2 =c \omega_2(I).
\end{eqnarray}
\noindent  For the case where $c$ is very large (i.e., $c>>1$ or $c\approx\frac{1}{\epsilon}$), we get the following degenerate action-angle-angle flow equations, after rescaling time $t=\frac{\tau}{c}$ and in the limiting case of $\epsilon=0$.
\begin{eqnarray}
\dot I=0,\;\;\;\;\;\;\;\;\;\;\; \dot \phi_1 =0\;\;\;\;\;\;\;\;\;\;\; \dot \phi_2 =\omega_2(I).\label{aaa}
\end{eqnarray}
\begin{figure}[h]
\begin{center}
\mbox{
\hspace{0in}
\subfigure{\scalebox{0.5}{\includegraphics{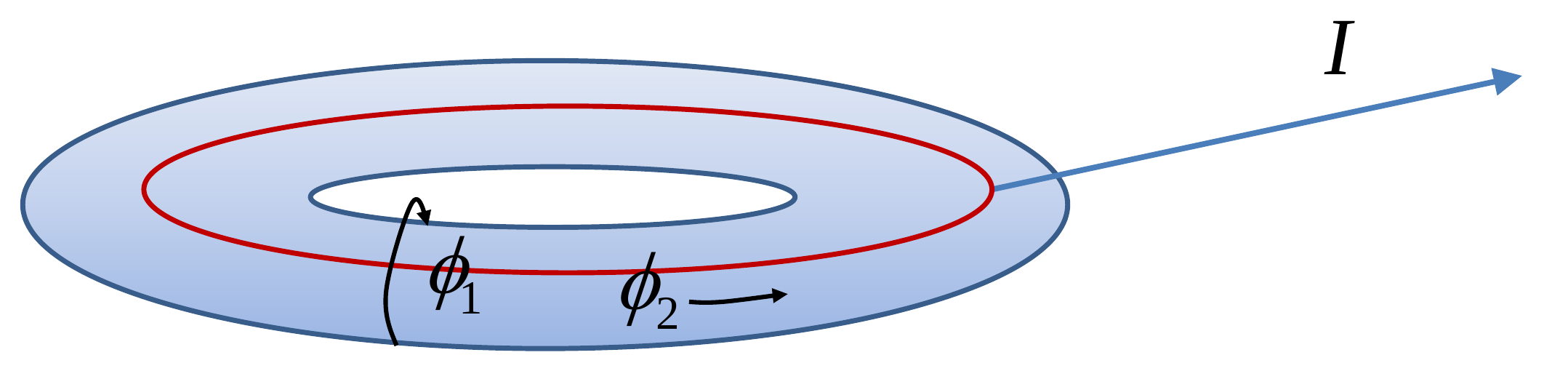}}} }
\caption{Geometry of action-angle-angle coordinates on cylinder torus with periodic orbit (red)}
\label{fig}
\end{center}
\end{figure}
\noindent The dynamics of (\ref{aaa}) evolves on a cylinder torus and consist of periodic orbits (refer to Fig. \ref{fig} for the schematic).
In this paper, we are interested in time periodic volume preserving perturbations of degenerate action-angle-angle flows as given in (\ref{aaa}) and the three dimensional maps that arise from it after taking appropriate Poincare section. We study the perturbation of the above discussed Hill spherical vortex flow for the case of large swirl in further detail in section \ref{Hill}.

%

Geometrical structures such as
invariant manifolds play an important role in understanding
the transport dynamics - specifically mixing and the lack thereof - in such maps.
From numerical studies and perturbation method
calculations, no invariant two-dimensional structure persists upon
perturbation from an integrable action-angle-angle map  with degenerate angle
\cite{piro}.  However the numerical studies carried out in \cite{piro} do not consider the class of perturbations for which the main result of this paper is proved and in fact corresponds to class of perturbations that is indicated in  \cite{mezicaaa} as ones with possibility of having invariant tori. Dynamics related to transport in phase space for
action-angle-angle maps with dynamically degenerate angle has been studied systematically in
\cite{mezicaaa} where it is shown that for a large class
of  such maps, upon perturbation, most of
the invariant surfaces are broken. The invariant surfaces break where resonance exists and at
these locations in phase space, periodic orbits of specific types
persist and dominate transport. This has been
named  Resonance-Induced Dispersion \cite{piro}.
The result in this
paper proves that, for a different class of perturbations, whose structure was also discussed in \cite{mezicaaa},
two-dimensional invariant tori indeed exists for the perturbed
action-angle-angle maps with degenerate angle satisfying intersection property - a condition that is implied by volume preservation. This proves the conjecture on such maps stated in \cite{mezicaaa}.

The KAM type of result for the action-angle-angle maps is
analogous to the degenerate Hamiltonian case treated by Arnold
\cite{arnoldsmallden_2}. In proving this degenerate case of KAM, we
are faced with two important problems. The first is due to unequal numbers of fast and slow
variables. Because of this a drift term is introduced at each step
of the coordinate transformations, we solve this problem by using
proof techniques similar to the one which appears in
\cite{chengsun1}. The second problem is due to the degenerate
nature of the one of the angles. We solve this problem by
introducing an intermediate finite sequence of coordinate
transformations. This finite sequence of coordinate
transformations is different from the intermediate coordinate
transformations which appear in Arnold's proof \cite{arnoldsmallden_2}
of degenerate KAM. The difference arises because of the difficulty with carrying out the Moser strategy of
solution of the sequence of equations by backward substitution which in this case leads to $O(1)$ terms after an iteration step. Thus our proof is different in nature from
the degenerate KAM proof which appears in \cite{arnoldsmallden_2}.

The paper is organized as follows. In section \ref{form}, we state the
main theorem for the persistence of invariant tori in
action-angle-angle maps with a degenerate angle. In section \ref{outl}, we give an outline of the
proof.  Simulation results for the Hill's spherical vortex example are presented in section  \ref{Hill} followed by conclusions in section \ref{conc}.

\section{Formulation of the theorem}
\label{form}

Consider the following mapping
\begin{equation}M=\left\{\begin{array}{ccl}
x_{1}&=&x+f(z)+\epsilon {\cal X}(x,y,z)\\
y_{1}&=&y+\epsilon
g_{0}(z)+\epsilon {\cal Y}(x,y,z)\;\;\;\;(x,y \;\;\;{\rm mod \; 2\pi})\\
z_{1}&=&z+\epsilon {\cal Z}(x,y,z)
\end{array}\right.\label{a1}
\end{equation}
where ${\cal X}$,${\cal Y}$, and ${\cal Z}$ are real analytic functions of period $2\pi$ in $(x,y)$ with $\epsilon$ being a small positive number. The $f$ and $g_0$ are analytic functions of $z\in [a,b]=G$. To simplify the analysis we assume that  $f(z)=z$ and $|g_0|\leq 1$. Since $\cal X,Y$ and,
$\cal Z$ are real analytic functions, they can be extended to a
complex domain:
\begin{eqnarray}
D\;\;: |{\mathbf Im}\;{x}|< r\leq 1,\;\;\;\;\;|{\mathbf Im}\;{y}|<
r \leq 1,\;\;\;\;\;z\in {\cal G},\label{complex_domain}
\end{eqnarray}
where ${\cal G}$ is the complex neighborhood of the interval
$[a,b]$. We now make following assumptions on the mapping (\ref{a1}).
\begin{assumption}\label{assumption_1} The functions $\cal Y$ and $Z$ are assumed to satisfy
\begin{center}
$\int_{0}^{2\pi}{\cal Y}dx=\int_{0}^{2\pi}{\cal Z}dx=0$.
\end{center}
The condition $\int_0 ^{2\pi}{\cal Y}dx=0$
can be relaxed by requiring that the integral $\int_0 ^{2\pi}{\cal
Y}dx$ be only a function of $z$ because any function of $z$ can
always be absorbed in $g_0(z)$.
\end{assumption}

\begin{assumption}\label{assumption_2}
Mapping (\ref{a1}) need not be
measure preserving but we assume that the map satisfies the
intersection property, i.e., any torus of the form:
\begin{eqnarray}
z=\gamma(x,y)\;\;\;{\rm
where}\;\;\;\gamma(x+2\pi,y)=\gamma(x,y)\;\;\&\;\;\;\gamma(x,y+2\pi)=\gamma(x,y)
\end{eqnarray}
intersect its image under the mapping.
\end{assumption}
\begin{assumption}\label{assumption_3} The function $g_0$ satisfies $g_0^{''}(z)\geq c_1>0$. $g_0^{''}>0$ is also referred to as second twist condition \cite{chengsun1}.
\end{assumption}
Now we state the main theorem for the persistence of
invariant tori in the action-angle-angle map with one degenerate angle.
\begin{theorem} \label{main_theorem} Consider the mapping (\ref{a1}) satisfying assumptions \ref{assumption_1}, \ref{assumption_2}, and \ref{assumption_3}.
 There exists a positive number $\epsilon_0$ depending upon domain $D$,
such that  on $D$ and for all $\epsilon \in (0,\epsilon_0)$, the
mapping (\ref{a1}) admits a family of invariant tori of the form:
\begin{eqnarray}
x=\xi+u(\xi,\zeta,\omega),\;\;\;\;\;
y=\zeta+v(\xi,\zeta,\omega),\;\;\;\;\; z=w(\xi,\zeta,\omega)\label
{a2},
\end{eqnarray}
where $u,v,w$ are real analytic functions of period $2\pi$ in the complex domain
 $|{\mathbf Im}\;x|< \frac{r}{2}$,  $|{\mathbf Im}\;y|< \frac{r}{2}$ with  $\omega\in S_{\omega}\subset G=[a,b]$, and $S_{\omega}$ is a Cantor set with positive Lebesgue measure. Moreover
the mapping can be parameterized so that the induced mapping on
the tori is given by
\begin{eqnarray}
\xi_{1}=\xi+\omega,\;\;\;\;\; \zeta_{1}=\zeta +\epsilon
g_{0}(\omega)+g^{*}(\omega,\epsilon)\label {a3},
\end{eqnarray}
where $g^{*}(\omega,\epsilon)$ is an analytic function that satisfies $g^*(\omega,0)=0$.
\end{theorem}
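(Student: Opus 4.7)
The plan is to carry out a super-convergent (Newton-type) KAM iteration that, for each ``good'' frequency $\omega \in [a,b]$, produces a sequence of real-analytic near-identity coordinate changes $\Phi_n$ on a nested family of complex strips $|\mathrm{Im}\,x|,\,|\mathrm{Im}\,y| < r_n$ with $r_n \searrow r/2$, conjugating $M$ to maps whose error from the target normal form (\ref{a3}) has size $\epsilon_n$ shrinking super-exponentially. The composition $\Phi_\infty = \lim \Phi_0 \circ \cdots \circ \Phi_n$ then yields the embedding (\ref{a2}). The admissible frequencies form a Cantor set $S_\omega$ cut out by Diophantine-type inequalities introduced at each step, and a standard measure-theoretic argument shows $S_\omega$ has positive Lebesgue measure.

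At each inductive step I would seek a near-identity change $(x,y,z) = (\xi+u,\zeta+v,\omega+w)$ and expand the current perturbation $(\mathcal{X}_n, \mathcal{Y}_n, \mathcal{Z}_n)$ in Fourier series in $(x,y)$. The linearized cohomological equations for the Fourier coefficients of $u,v,w$ carry divisors of the form $e^{i(k_x \omega + k_y \epsilon g_n(\omega))} - 1$. Modes with $k_x \neq 0$ are inverted using a Diophantine condition on $\omega$ alone; the mean $(k_x,k_y) = (0,0)$ contribution of $\mathcal{Z}_n$ is killed by the intersection property (Assumption \ref{assumption_2}), which prevents a systematic $z$-drift; and the $k_x = 0$ piece of $\mathcal{Y}_n$ is absorbed into the updated drift $g_{n+1}$ via Assumption \ref{assumption_1}.

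The central obstacle, and the reason the standard scheme must be modified, is the band of modes with $k_x = 0$, $k_y \neq 0$. Their divisor $e^{i k_y \epsilon g_n(\omega)} - 1$ is of order $\epsilon_n$, so direct inversion would inflate the perturbation back to $O(1)$ and destroy super-convergence; this is precisely where Moser-style backward substitution fails in the degenerate setting. Following the strategy flagged in the introduction, I would at each outer step insert an \emph{intermediate finite sequence} of coordinate changes: first, use the $O(1)$ divisors to eliminate \emph{all} $k_x \neq 0$ modes, reducing to an averaged perturbation depending only on $(y,z)$; then perform an Arnold-style \emph{degenerate} KAM step on the slow $(y,z)$ pair, in which the second twist Assumption \ref{assumption_3} ($g_0'' \geq c_1 > 0$) supplies the non-degeneracy needed to solve the small-divisor problem by an additional restriction of $\omega$. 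The finite sequence is tuned so that the residual after the slow step is again $O(\epsilon_n^{1+\sigma})$ rather than $O(1)$.

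Once the inductive step is established with the quadratic loss $\epsilon_{n+1} \leq C \epsilon_n^{1+\sigma}$, convergence of $\Phi_0 \circ \cdots \circ \Phi_n$ on the limit strip follows from Cauchy estimates with geometric loss of analyticity radii, and the measure of $[a,b] \setminus S_\omega$ is controlled by summing excluded resonant intervals: the primary ones give the classical estimate, while the slow-step ones are controlled by the lower bound $g_0'' \geq c_1$ on the derivative of the slow frequency map. The hardest technical point is the middle of the third paragraph: verifying that the intermediate finite sequence produces a remainder of order $\epsilon_n^{1+\sigma}$ uniformly in $n$, with constants that do not spoil the overall super-convergence; this is where the proof genuinely diverges from both the non-degenerate volume-preserving KAM of \cite{chengsun1} and the degenerate Hamiltonian KAM of Arnold.
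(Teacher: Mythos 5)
You have correctly isolated the central obstruction --- the $k_x=0$, $k_y\neq 0$ band, whose divisors $e^{ik_y\epsilon g_n(\omega)}-1$ are only $O(\epsilon)$ --- but the device you propose to defeat it does not close the gap, and in fact is the route the paper explicitly abandons. An ``Arnold-style degenerate KAM step on the slow $(y,z)$ pair,'' inserted at every outer iteration, would have to live off the twist of the slow frequency map, which is $\epsilon g_0'$, i.e.\ itself of size $\epsilon$; solving the slow cohomological equations by backward substitution then returns $O(1)$ terms, which is precisely the failure mode named in the introduction. Moreover you never establish the size hierarchy that makes any of this salvageable: the paper first performs a single averaging transformation (Lemma \ref{averaging_lemma}) that drives the whole perturbation to $O(\epsilon^2)=d_0$, so that the perturbation is one full order \emph{smaller} than the degenerate drift $\epsilon g_0$; without that step the drift update at your first iteration is of the same size as $g_0$ itself and the Diophantine conditions (\ref{d1}) cannot be propagated. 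The paper's actual mechanism for the degenerate band is quantitative rather than structural: a finite sequence of six transformations, acting on $(x,z)$ only with $y$ frozen as a parameter, pushes the \emph{action} perturbation alone down to $d_0^{3/2}$, and the infinite sequence then maintains the asymmetric bookkeeping $|Z_n|<d_n^{3/2}$ versus $|X_n|+|Y_n|<d_n$, so that every division by an $\epsilon^{1+\gamma_2}$-sized divisor is paid for by the spare factor $d_n^{1/2}\leq\epsilon$ carried by $Z_n$ (this is visible in the estimate $|W|\lesssim d^{3/2}/\epsilon$ of Lemma \ref{lemmainduction}). Your plan contains no analogue of this asymmetry, and the point you defer as ``the hardest technical point'' is exactly where it is indispensable.

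A second, independent gap is the treatment of the frequency set. Because the number of actions and angles is unequal, each step produces a drift $g_{j+1}(0,\omega)$ that is defined only on the Cantor set surviving the previous step; before one can even state the measure estimate for $S_{j+1}(\omega)$ one must extend $X_j,Y_j,Z_j$ to all of $G$ in $\omega$ by Whitney's theorem, verify $|d^2 g_{j+1}/d\omega^2|<\epsilon\beta_{j+1}$ with $\sum\beta_l<c_1/2$ so that the second twist condition survives the accumulated drift, and let the Diophantine constant decay as $K_j=K_0/2^j$ while checking that this decay does not destroy the superconvergence. Your phrase ``a standard measure-theoretic argument'' does not cover this; it is the content of Lemma \ref{cantorlemma} together with the induction on $g_j$, and it is where Assumption \ref{assumption_3} actually enters (through the curvature of the frequency curve $\omega\mapsto(\omega,\epsilon g_0(\omega))$, not the first derivative). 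Your use of the intersection property to kill the mean of the action perturbation and your identification of the loss-of-analyticity scheme are correct, but as it stands the proposal reproduces the statement of the difficulty rather than the proof.
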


\section{ Outline of the proof}
\label{outl}
 The proof consists of applying coordinate transformations in three different steps.
The first step of averaging coordinate transformation is applied
to reduce the size of all the three perturbations to order
$\epsilon^{2}$. The second step consists of applying a finite
sequence of coordinate transformations to reduce the size of the
action perturbation to order $\epsilon^{3}$. In the third and
final step, we apply an infinite sequence of coordinate
transformations similar to the one applied in proving the
classical KAM theorem
\cite{arnoldsmallden,arnoldsmallden_2,SiegelandMoser:1971,chengsun1}, but with some
modifications.

\subsection{ First coordinate transformation}
\label{first}

With $f(z)$ replaced with $z$ in (\ref{a1}), we denote the original mapping $M$ (Eq. \ref{a1}) by $M_0$ and write it as
follows:
\begin{equation} M_{0}=\left\{\begin{array}{ccl}
x_{1}&=&x+z+\epsilon{\cal X}(x,y,z)\\ y_{1}&=&y+\epsilon
g_{0}(z)+\epsilon {\cal Y}(x,y,z)\;\;\;\;(x,y \;\;\;{\rm mod \;\;2\pi})\\
z_{1}&=&z+\epsilon {\cal Z}(x,y,z).
\end{array}\right.\label{firstcoordeq}
\end{equation}
This map is defined in the complex domain $D$ (Eq.
\ref{complex_domain}). Now we prove the main Lemma of the first
coordinate transformation. This Lemma is similar to the averaging
Lemma from \cite{arnoldsmallden_2}.

\begin{lemma}\label{averaging_lemma} Consider a coordinate transformation $I_\epsilon$, defined in domain $D$, of the form:
\[I_{\epsilon}=\{\bar{x}=x+\epsilon h_{1}(x,y,z),\;\;\;\;\; \bar{y}=y+\epsilon
h_{2}(x,y,z),\;\;\;\;\;\bar{z}=z+\epsilon h_{3}(x,y,z), \] where
$h_1, h_2$ and $h_3$ are real analytic functions and  periodic
with period $2\pi$ in $x$ and $y$. Using this coordinate
transformation, the mapping $M_0$ (Eq. \ref{firstcoordeq}), defined in the domain $D$, is transformed to the form $\bar M_{0}=I_{\epsilon}M_{0}I_{\epsilon}^{-1}$
\begin{equation}\bar M_{0}=\left\{\begin{array}{ccl}
\bar
x_{1}&=&\bar{x}+\bar{z}+\bar{X}(\bar{x},\bar{y},\bar{z})\\\nonumber
\bar y_{1}&=&\bar{y}+\epsilon
g_{0}(\bar z)+\bar{Y}(\bar{x},\bar{y},\bar{z})\;\;\;\;(x,y \;\;\;{\rm mod 2\pi})\\
\nonumber \bar z_{1}&=&\bar{z}+\bar{Z}(\bar{x},\bar{y},\bar{z}).
\end{array}\right.
\end{equation}
The mapping $\bar M_0$ is defined in a smaller domain:
\[\bar D: |{\mathbf Im}\; x|<r-\delta,\;\;\; \;|{\mathbf Im}\; y|<r-\delta,\;\;\;\; z\in {\cal G}^{'},\]
where $\delta$ is a small positive number. The domain ${\cal G}^{'}$ is
the complex neighborhood of $G^{'}$ and $G^{'}$ is obtained from
$G=[a,b]$ by removing finite number of resonance zones. In this
reduced domain $G^{'}$, $z$ satisfies following inequalities
\[|kz+2\pi n|\geq \bar K |k|^{-\bar \mu}\;\;\;(0<|k|\leq N),\]
where $\bar K$ is a positive constant, $N$ is a large integer, and
$\bar \mu\geq 3$.
 We have the  following estimates on the perturbations $\bar{X},\bar{Y},\bar{Z}$ in the domain $\bar D$

\begin{center}
$|\bar{X}|+|\bar{Y}|+|\bar{Z}|< \epsilon^{2}=d_{0}$.
\end{center}
\end{lemma}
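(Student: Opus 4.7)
The plan is to determine the generators $(h_1,h_2,h_3)$ of the near-identity transformation $I_\epsilon$ by killing the $O(\epsilon)$ part of each component of the perturbation. Expanding $\bar{M}_0 = I_\epsilon M_0 I_\epsilon^{-1}$ to first order in $\epsilon$ — substituting $x = \bar{x} - \epsilon h_1(\bar{x},\bar{y},\bar{z}) + O(\epsilon^2)$, etc., into $M_0$ and then applying $I_\epsilon$ — and collecting the $O(\epsilon)$ terms, one is led to the three homological equations
\begin{align*}
h_1(x+z,y,z) - h_1(x,y,z) &= h_3(x,y,z) - {\cal X}(x,y,z),\\
h_2(x+z,y,z) - h_2(x,y,z) &= -{\cal Y}(x,y,z),\\
h_3(x+z,y,z) - h_3(x,y,z) &= -{\cal Z}(x,y,z).
\end{align*}
The $\epsilon g_0(z)$ drift contributes to the $y$-balance only at order $\epsilon^2$, since $g_0(\bar z) - g_0(z) = \epsilon g_0'(z) h_3 + O(\epsilon^2)$, so it does not affect the leading equations. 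Solvability of each equation is checked by averaging in $x$: the $h_2$ and $h_3$ equations have vanishing mean on the right by Assumption \ref{assumption_1}, while the $h_1$ equation needs $\langle h_3\rangle_x = \langle{\cal X}\rangle_x$. Since $\langle h_3\rangle_x(y,z)$ is not pinned down by its own equation, I would set it equal to $\langle{\cal X}\rangle_x$, thereby making $h_1$ solvable — this is the implicit step that corresponds to the ``unstated assumption'' issue flagged in the introduction.

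To solve the equations explicitly, expand in a Fourier series in $(x,y)$. Writing ${\cal Z}(x,y,z)=\sum_{k,l}{\cal Z}_{kl}(z)e^{i(kx+ly)}$ and similarly for the other quantities, the $h_3$ equation becomes
\[
h_{3,kl}(z)\,(e^{ikz}-1) = -{\cal Z}_{kl}(z),\qquad k\neq 0,
\]
and likewise for $h_1,h_2$. This introduces the usual small divisors. I handle them in the standard two-step way: first, truncate the Fourier series at $|k|\leq N$, controlling the tail by exponential decay of Fourier coefficients of real-analytic functions on the complex strip — the tail is bounded by a constant times $\epsilon\, e^{-N\delta}$, which is absorbed into $O(\epsilon^2)$ by choosing $N\sim\frac{1}{\delta}\log\frac{1}{\epsilon}$. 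Second, restrict $z$ to the set $G'\subset G$ obtained by excising the finitely many resonance zones $\{|kz+2\pi n|<\bar K|k|^{-\bar\mu}\}$, $0<|k|\leq N$; on $G'$ we have $|e^{ikz}-1|\geq c\bar K|k|^{-\bar\mu}$, so each Fourier coefficient of $h_j$ is bounded by a power of $|k|$ times $|{\cal X}_{kl}|,|{\cal Y}_{kl}|,|{\cal Z}_{kl}|$. Summing the resulting series on the slightly shrunk strip $|{\rm Im}\,x|,|{\rm Im}\,y|<r-\delta$ and applying Cauchy estimates for the $z$-derivatives on ${\cal G}'$ gives analytic bounds for $h_1,h_2,h_3$ with losses polynomial in $N$.

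Finally, I would read off $\bar X,\bar Y,\bar Z$ as the second-order Taylor remainders produced by the conjugation: each is a quadratic form in $(\epsilon h_j,\epsilon{\cal X},\epsilon{\cal Y},\epsilon{\cal Z},\epsilon g_0'\cdot h_3)$ with coefficients bounded by second derivatives of the $h_j$'s and of ${\cal X},{\cal Y},{\cal Z}$ on $\bar D$. Using the bounds from the previous step, these remainders satisfy $|\bar X|+|\bar Y|+|\bar Z|\leq C(r,\delta,\bar K,\bar\mu)\,\epsilon^2$ on $\bar D$; the polynomial factor is absorbed by shrinking $\epsilon_0$, yielding the claimed bound $d_0=\epsilon^2$. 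The main obstacle is the coupled solvability of the $h_1$ and $h_3$ equations — it is not enough to solve them independently, because the mean of $h_3$ feeds the $h_1$ equation's solvability condition; the other technical obstacle is balancing the analyticity loss ($\delta$), the truncation order ($N$), and the Diophantine threshold ($\bar K$) so that the resonance-zone measure is small while the small-divisor bound keeps the generators bounded in the shrunken domain.
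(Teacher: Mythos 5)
Your proposal follows essentially the same route as the paper: the same homological equations, the same Fourier truncation at $|k|\leq N$ with $N\sim \delta^{-1}\log(1/\epsilon)$, the same excision of resonance zones in $z$ via $|kz+2\pi n|\geq \bar K|k|^{-\bar\mu}$, and the same resolution of the coupled solvability by setting the $x$-mean of $h_3$ equal to that of ${\cal X}$ (the paper's choice $h_{3_0}={\cal X}_0$). One small aside: that choice is \emph{not} the ``unstated assumption'' flagged in the introduction --- that remark concerns the cited reference's implicit requirement $\bar Z=0$ --- but this mislabeling does not affect the correctness of your argument.
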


\begin{proof}: The difference equation (\ref{firstcoordeq}) in the new coordinates can be written as follows:
\[\bar x_{1}=\bar x+\bar z
+\epsilon {\cal X}^{1}+O(\epsilon^{2}),\;\;\;
\bar y_{1}=\bar y+\epsilon g(\bar z)+\epsilon {\cal
Y}^{1}+O(\epsilon^{2}),\;\;\;
\bar z_{1}=\bar z+\epsilon {\cal Z}^{1}+O(\epsilon^{2}).\]
The size of the perturbations in the new coordinate will be of the
order $\epsilon^{2}$ if each of the following terms is of order
$\epsilon$.
\begin{eqnarray}
{\cal X}^{1}&:=&{\cal
X}(x,y,z)+h_{1}(x+z,y,z)-h_{1}(x,y,z)-h_{3}(x,y,z)\nonumber\\
{\cal Y}^{1}&:=&{\cal
Y}(x,y,z)+h_{2}(x+z,y,z)-h_{2}(x,y,z)\nonumber\\ {\cal
Z}^{1}&:=&{\cal Z}(x,y,z)+h_{3}(x+z,y,z)-h_{3}(x,y,z)\label{u3}
\end{eqnarray}
Perturbations $\cal X, Y,$ and $\cal Z$ can be expressed in the
Fourier series as
\[
 {\cal X}=\sum_{k=-\infty}^{\infty}{\cal
X}_{k}(y,z)e^{ikx},\;\; {\cal Y}=\sum_{k=-\infty}^{\infty}{\cal
Y}_{k}(y,z)e^{ikx},\;\; {\cal Z}=\sum_{k=-\infty}^{\infty}{\cal
Z}_{k}(y,z)e^{ikx}.\] Now we represent each of the $h_{i}$ by the
finite series $h_{i}=\sum_{|k|\leq N}h_{i_{k}}(y,z)e^{ikx}$, where
$h_{i_{k}}$ satisfies following equality for $|k|\leq N$:
\begin{eqnarray*}
h_{1_{k}}=\frac{{\cal X}_{k}-h_{3_{k}}}{(1-e^{ikz})},\;\;\;
h_{2_{k}}=\frac{{\cal Y}_k}{(1-e^{ikz})},\;\;\;
 h_{3_{k}}=\frac{{\cal Z}_{k}}{(1-e^{ikz})}.\label {a4}
\end{eqnarray*}
To satisfy the above equation for bounded $h_{ik}$, we require $z$
to satisfy the following inequalities:
\[|k z+2\pi n|\geq \bar K  |k|^{-\bar \mu}\;\;\; (0<|k|\leq N),\]
for some positive constant $\bar K$ and $\bar \mu\geq 3$.
 Since average
value of $\cal Z$ and $\cal Y$ with respect to $x$ is equal to
zero, $h_{3_{0}}$ and $h_{2_{0}}$ are free to take any value. We
make $h_{3_{0}}={\cal X}_{0}$ so as to satisfy first equality of
(\ref{u3}). With these choices of $h_{i}$ (\ref{u3}) reduces to
\[{\cal X}^{1}=\sum_{|k|>N}{\cal X}_{k}e^{ikx}, \;\;{\cal Y}^{1}= \sum_{|k|>N}{\cal Y}_{k}e^{ikx},\;\; {\cal
Z}^{1}=\sum_{|k|>N}{\cal Z}_{k}e^{ikx}.\]
Each of these terms will
be of order $\epsilon$, if $N$ is chosen sufficiently large to be
of order greater than
$\frac{1}{\delta}\ln{\frac{2}{(1-e^{-\delta})\epsilon}}$, (refer
to \cite{arnoldsmallden_2}, technical Lemmas on page 163).  The
$\delta$ is related to the new  domain $\bar D$ as follows:

\[\bar D\;\;: |{\mathbf Im}\;{x}|< r-\delta, \;\;\;|{\mathbf Im}\;{y}|< r-\delta,\; \;\;z\in {\cal G}^{'}.\]
This new complex domain ${\cal G}^{'}$ of $z$ is a complex
neighborhood of $G^{'}$, where $G^{'}$ is obtained from $G$ after
removing the finite number of resonance
intervals. The total measure
of the resonance
 intervals has an upper bound of $(b-a)2\bar K$, so
that the reduced domain is of order 1 for small value of $\bar K$
(refer to \cite{arnoldsmallden_2}, technical Lemmas on page 163).


In
this new domain following inequalities are satisfied
\[
|k z+2\pi n|\geq\bar K|k|^{-\bar \mu}\;\;\;(0<|k|\leq N).
\]
\end{proof}

After this first averaging coordinate transformation, we get
following estimates on the perturbations
\[|\bar X|+|\bar Y|+|\bar Z|< \epsilon^{2}=d_{0}.\]

The action variable $z$ now belongs to the domain which is a
function of $\epsilon$ i.e., $z\in G^{'}(\epsilon)$ and the
magnitude of the connected components of $G^{'}$ is going to zero as
$(\ln \frac{1}{\epsilon})^{-2}$.


At this point it seems that with some work is needed to deal with the
$(\ln \frac{1}{\epsilon})^{-2}$
shrinkage of connected components of  $G^{'}$, we should be able to utilize results of \cite{Zhu_Wen} to conclude existence of a Diophantine invariant tori in the perturbed mapping. However, careful examination of the proof of the result in \cite{Zhu_Wen} reveals that  the proof holds true only under the additional assumption that the perturbation $\bar Z=0$. The assumption of $\bar Z=0$ is clearly not satisfied in our case since $\cal Z$ is not assumed to be zero. In fact, the finite sequence of coordinate transformations discussed in the following section are precisely introduced to decrease the size of action perturbation $\bar Z$ relative to other perturbations, if not to make it zero.


\subsection{Second coordinate transformation}
\label{sec}
At this point we would like to continue with the standard infinite
sequence of coordinate transformations as in Moser
\cite{SiegelandMoser:1971} but we are faced with the following
problem. The aim is to reduce the size of all the three
perturbations $\bar X, \bar Y $ and $\bar Z$. Due to the
degenerate nature of the angle $y$, the small denominator problem
is exaggerated. The degenerate angle $y$ introduces a term of order
$\frac{1}{\epsilon}$ in the estimates, which gives $O(1)$
estimates for the size of the coordinate transformation. This
makes it impossible to continue with the infinite sequence of
coordinate transformations. This problem can be solved by
introducing an intermediate finite sequence of coordinate
transformations. The aim of the finite sequence of coordinate
transformations is to reduce the size of action perturbation $Z$
to order
$\epsilon^3$ so that $\frac{1}{\epsilon}$ order
term introduced by the degenerate angle can be compensated.

\noindent For notational convenience we remove the over-bar
notation from the coordinates and the perturbations $\bar X, \bar
Y$ and $\bar Z$ and parameterize the map by $\omega$. The new
map after the first coordinate transformation is denoted by
$M_0(\omega)$. At this stage it is not really necessary to
parameterize the mapping by $\omega$ however the importance of
this parameterization will become clear later in the infinite
sequence of coordinate transformations. We have,

\begin{equation}M_{0}(\omega)=\left\{\begin{array}{ccl}
x_{1}&=&x+\omega +z+X(x,y,z,\omega)\\\nonumber y_{1}&=&y+\epsilon
g_{0}(z,\omega)+Y(x,y,z,\omega)\;\;\;\;\\
\nonumber z_{1}&=&z+Z(x,y,z,\omega)
\end{array}\right.\label{e1}
\end{equation}
defined in the domain:  \[D_{0}(\omega):
|{\mathbf Im}\; x|< \hat{r}_{0}<1,\;\;\;\;\; |{\mathbf Im}\; y|<
\hat{r}_0<1,\;\;\;\;\; |z|< \hat{s}_0,\;\; \;\; \omega\in G^{'},
\]
where $\hat r_0, \hat s_0$ are positive numbers defined later and
$|X|+|Y|+|Z|<d_{0}$ in $D_{0}(\omega)$. The mapping $M_0(\omega)$
is parameterized such that $g_0(z,\omega)=g_0(z+\omega),
X(x,y,z,\omega)=X(x,y,z+\omega)$ and so on for $Y$ and $Z$. In the
second coordinate transformation we treat this map as an
action-angle map, where $x$ is the angle and $z$ is the action, and we
consider $y$ as a parameter. Note that $\omega\in G^{'}$, and
the magnitude of the connected components of $G^{'}$ is going to zero as
$(\ln \frac{1}{\epsilon})^{-2}$.
In order to account for the
shrinking size of the connected components of the domain $G^{'}$ with decreasing
$\epsilon$, we require $\omega$ to satisfy infinitely many
inequalities of the form:
\begin{equation}
|k\omega+2\pi n|\geq  \epsilon^{\gamma_1} \hat K|k|^{-\hat
\mu}\;\;\;(k,n=1,2,...)\label{sec_coor_inequ},
\end{equation}
where $\hat K$ is a positive constant, $\hat \mu \geq 3$, and
$\gamma_1$ is a suitably chosen constant satisfying
$0<\gamma_1<<1$. The introduction of $\epsilon^{\gamma_1}$ term in
(\ref{sec_coor_inequ}) ensures that while the size of the domain
$G^{'}$ goes to zero as
 $(\ln \frac{1}{\epsilon})^{-2}$
the
reduction in the size of perturbation can be obtained in the
neighborhood of the nonresonant value of action, the length of
which tends to zero as power of $\epsilon$. Now we show that
after applying finitely many coordinate transformations we can
reduce the size of action perturbation $Z$ to order
$d_{0}^{\frac{3}{2}}$. Let $T_{i}(\omega)$ denote these coordinate
transformations. Let
$M_{k}(\omega)=T_{k-1}^{-1}(\omega)M_{k-1}(\omega)T_{k-1}(\omega)$
be the mapping obtained after applying these coordinate
transformations and defined in the domain $D_{k}(\omega)$. We will
suppress the dependence on $\omega$ of the mapping $M$ and
coordinate transformation $T$ at some places for notational
convenience. We have following Lemma for the intermediate step of coordinate transformation.
\begin{lemma}\label{lemmasecondcoordinate} There exists a
coordinate transformation $T(\omega)$ of the form:

\begin{equation}\label{cor2}
T(\omega)= \{x=\varphi+\hat U(\varphi,\psi,\eta,\omega),\;\;\;\;
y=\psi,\;\;\;\; z=\eta+\hat W(\varphi,\psi,\eta,\omega)
\end{equation}
such that the mapping $M_{0}(\omega)$ (Eq. \ref{e1})
defined in the domain:
\[{\cal A}_0(\omega): |{\mathbf Im}\;x|<\hat r,\;\;\;\;|{\mathbf Im}\;y|<\hat r,\;\;\;\;|z|<\hat s,\]
 with $|X|+|Y|+|Z|<d$, takes the form $M(\omega)=T^{-1}(\omega)M_{0}(\omega)T(\omega)$
\begin{equation}
M(\omega)=\left\{\begin{array}{ccl} \varphi_{1}&=&\varphi+\omega+\eta + \hat{X}(\varphi,\psi,\eta,\omega)\\
\psi_{1}&=&\psi+\epsilon g_{0}(\eta,\omega)
+\hat{Y}(\varphi,\psi,\eta,\omega)\\\eta_{1}&=&\eta+
\hat{Z}(\varphi,\psi,\eta,\omega).
\end{array}\right. \label{act2}
\end{equation}
The mapping $M(\omega)$ is defined in the smaller domain ${\cal A}_{1}(\omega): |{\mathbf Im}
\;\varphi|< \hat{\rho}$, $|{\mathbf Im} \;\psi|< \hat{\rho}$,
$|\eta|<\hat{\sigma}$, with $0<\hat{\rho}<\hat{r}$,
$0<\hat{\sigma}<\hat{s}$. Assume that
\begin{equation}
\hat{r}<1,\;\;
0<3\hat{\sigma}<\hat{s}<\frac{\hat{r}-\hat{\rho}}{4},\;\;\;d<\frac{\hat{s}}{6},\;\;\;\hat \vartheta <\frac{\hat
\vartheta_2}{\hat s}<\frac{1}{7}\label{s5},
\end{equation}
\[{\rm where}\;\;\;
\hat{\vartheta}=
\frac{a_{1}}{\hat K^{2}}(\hat{r}-\hat{\rho})^{-2\hat
\mu-2}\frac{d}{\hat{s}}\epsilon^{-2\gamma_1}<\frac{1}{7},\;\;\;
\hat{\vartheta_2}=
\frac{a_{1}}{\hat K^{4}}(\hat{r}-\hat{\rho})^{-4\hat
\mu-3}d\epsilon^{-4\gamma_1},\;\;\; \] and $a_1$ is a positive
constant independent of the domain and depends only upon $\hat
\mu$. Using the above assumptions, we get following estimates for $\hat U,\hat
W,\hat{X},\hat{Y}$, and $\hat{Z}$

\begin{eqnarray*}
|\hat Y|&<& d,\;\;\;\;|\hat U|+|\hat
W|<\hat{\vartheta}\hat{s},\nonumber\\
|\hat{X}|+|\hat{Z}|&<& a_{4}\Big ((\hat{r}-\hat{\rho})^{-2\hat
\mu-3}\epsilon^{-2\gamma_1}d\hat{s}+(\hat{r}-\hat{\rho})^{-2\hat
\mu-3}\epsilon^{-2\gamma_1}\frac{d^{2}}{\hat{s}}\nonumber\\&+&(\hat{r}-\hat{\rho})^{-2\hat
\mu-3} \epsilon^{1-2\gamma_1}
d+(\frac{\hat{\sigma}}{\hat{s}})^{5}d \Big ),
\end{eqnarray*}
where $a_4$ is a positive constant independent of the domain.
\end{lemma}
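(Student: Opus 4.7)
The plan is to perform a single coordinate transformation that treats $(x,z)$ as action--angle coordinates in the map $M_{0}(\omega)$ with $\psi$ and $\omega$ as parameters, the goal being to kill the $\varphi$-dependent part of the action perturbation $Z$ while leaving the degenerate $y$-direction untouched. First I would write the conjugation identity $M_{0}T=TM$ component-wise. Taylor-expanding $\hat U, \hat W$ at the image point of $M$ to first order in the new perturbations $(\hat X,\hat Y,\hat Z)$ and expanding $X, Z$ to first order in $(\hat U,\hat W)$ about $(\varphi,\psi,\eta)$, the $x$- and $z$-equations reduce at the linear order to the two cohomological equations
\[
\hat U(\varphi+\omega+\eta,\psi,\eta)-\hat U(\varphi,\psi,\eta)=X(\varphi,\psi,\eta)+\hat W(\varphi,\psi,\eta),
\]
\[
\hat W(\varphi+\omega+\eta,\psi,\eta)-\hat W(\varphi,\psi,\eta)=Z(\varphi,\psi,\eta),
\]
while the $y$-equation gives directly $\hat Y=Y+\epsilon\bigl(g_{0}(\eta+\hat W,\omega)-g_{0}(\eta,\omega)\bigr)$. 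The new perturbations $\hat X, \hat Z$ are then defined as the nonlinear residuals obtained by substituting the solutions back into the full identity.

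The averaging step of Lemma \ref{averaging_lemma} has ensured $\int X\,d\varphi=\int Z\,d\varphi=0$ (by Assumption \ref{assumption_1}, preserved by that step), so both cohomological equations have vanishing $\varphi$-mean and can be solved by Fourier series in $\varphi$. Writing $Z=\sum_{k\neq0}Z_{k}(\psi,\eta)e^{ik\varphi}$, I set $\hat W_{k}=Z_{k}/(e^{ik(\omega+\eta)}-1)$ for $k\neq0$ and $\hat W_{0}=0$, and analogously for $\hat U_{k}$ with right-hand side $X_{k}+\hat W_{k}$. The small divisor $e^{ik(\omega+\eta)}-1$ is bounded below via the Diophantine condition (\ref{sec_coor_inequ}) on $\omega$ together with the smallness $|\eta|<\hat\sigma$, which absorbs the $k\eta$ correction as long as $|k|\hat\sigma<\tfrac12\epsilon^{\gamma_{1}}\hat K|k|^{-\hat\mu}$; the Fourier tail beyond this range of $|k|$ is exponentially small by analyticity. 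Standard estimates on the shrinking of analytic strips from width $\hat r$ to width $\hat\rho$ (via the bound $\sum_{k}|k|^{\hat\mu}e^{-|k|(\hat r-\hat\rho)}\lesssim(\hat r-\hat\rho)^{-\hat\mu-1}$) yield $|\hat W|$ of order $\hat K^{-1}\epsilon^{-\gamma_{1}}(\hat r-\hat\rho)^{-\hat\mu-1}d$ and $|\hat U|$ of order $\hat K^{-2}\epsilon^{-2\gamma_{1}}(\hat r-\hat\rho)^{-2\hat\mu-2}d$, the second acquiring an extra small-divisor factor because $\hat W$ appears on its right-hand side. This is exactly the $\hat\vartheta\hat s$ bound of the lemma.

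The residuals $\hat X, \hat Z$ satisfy implicit equations of the schematic form
\[
\hat X=\bigl[X(\varphi+\hat U,\psi,\eta+\hat W)-X(\varphi,\psi,\eta)\bigr]+\bigl[\hat U(\varphi+\omega+\eta,\psi,\eta)-\hat U(\varphi_{1},\psi_{1},\eta_{1})\bigr],
\]
and similarly for $\hat Z$, obtained after substituting the cohomological solutions back into the conjugation identity and observing that the explicit linear pieces cancel exactly. Each bracket is a product of two small quantities. The first is controlled by $|\partial X|\cdot(|\hat U|+|\hat W|)\le(d/\hat s)\,\hat\vartheta\hat s$, which produces the $d\hat s$ and $d^{2}/\hat s$ contributions (the latter after inserting the sharper bound on $|\hat U|+|\hat W|$). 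The second bracket uses one Cauchy-estimated derivative of $\hat U$ and contributes the extra factor $(\hat r-\hat\rho)^{-1}$ that raises the exponent to $-2\hat\mu-3$. The $\epsilon^{1-2\gamma_{1}}d$ term arises from the $\epsilon(g_{0}(\eta+\hat W,\omega)-g_{0}(\eta,\omega))$ piece transmitted through the $\psi$-slot, and the $(\hat\sigma/\hat s)^{5}d$ term is the analytic Taylor remainder of $X$ and $Z$ in $\eta$ after expansion about $\eta=0$ and truncation, valid on the smaller action disc $|\eta|<\hat\sigma$.

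The main difficulty is that the equations for $\hat X, \hat Z$ are implicit: the unknown perturbations appear inside their own right-hand sides through the evaluation at the image point $(\varphi_{1},\psi_{1},\eta_{1})$. I would close the argument by a fixed-point iteration in the supremum norm on ${\cal A}_{1}(\omega)$, with the smallness conditions (\ref{s5})---especially $\hat\vartheta<\hat\vartheta_{2}/\hat s<1/7$ and $d<\hat s/6$---engineered precisely so that the iteration map is a contraction. The same bounds, combined with a quantitative inverse function argument applied to $T-\mathrm{id}$ (whose derivative is of order $\hat\vartheta$), guarantee that $T$ is an analytic diffeomorphism onto its image and that $T^{-1}$ is well defined on ${\cal A}_{1}(\omega)$, so that the composition $M=T^{-1}M_{0}T$ is itself analytic on the reduced domain.
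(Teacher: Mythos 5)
Your overall route is the one the paper itself intends: the paper does not write this proof out but defers to Siegel--Moser (Chapter 3, Section 32), i.e.\ the Moser twist-map iteration step applied to the $(x,z)$ pair with $y=\psi$ carried along as a parameter, and your cohomological equations, small-divisor bounds and residual bookkeeping are the standard implementation of exactly that. There is, however, one genuine gap. You justify solvability of the difference equation for $\hat W$ by asserting $\int Z\,d\varphi=0$, attributing this to Assumption~\ref{assumption_1} as ``preserved'' by Lemma~\ref{averaging_lemma}. That is not true: the averaging step kills the $x$-mean of the action perturbation only at leading order, the $O(\epsilon^2)=O(d)$ remainder created by the composition has no reason to be mean-free, and this lemma is applied six times in succession, so at later applications the means are certainly nonzero. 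Since the left-hand side $\hat W(\varphi+\omega+\eta,\cdot)-\hat W(\varphi,\cdot)$ always has zero $\varphi$-mean, the constant mode $[Z]_\varphi$ cannot be removed by any transformation of the form (\ref{cor2}); if it were of size $d$ it would sit inside $\hat Z$ and destroy the claimed bound on $|\hat X|+|\hat Z|$, which must be $o(d)$ for the scheme $d_{n+1}\sim d_n^{6/5}$ to close. The missing ingredient is Assumption~\ref{assumption_2}: the conjugated map still satisfies the intersection property, so each torus $\eta=\mathrm{const}$ meets its image, hence the new action perturbation $\hat Z$ vanishes at some point of that torus and $\sup|\hat Z|$ is bounded by its oscillation, which \emph{is} controlled by the mean-free estimates you derived. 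Relatedly, the zeroth mode $X_0$ on the right-hand side of the $\hat U$ equation should be absorbed by the free mode $\hat W_0$ (as is done with $h_{3_0}={\cal X}_0$ in Lemma~\ref{averaging_lemma}) rather than setting $\hat W_0=0$ and assuming $X_0=0$.

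A secondary bookkeeping point: you place the divisor at $e^{ik(\omega+\eta)}-1$ and control it by demanding $|k|\hat\sigma$ be dominated by the Diophantine bound, but $\hat W$ must be defined for $|\eta|<\hat s$, not merely $|\eta|<\hat\sigma$, for the composition $M_0T$ to make sense, and the truncation level needed for an exponentially small tail grows like $(\hat r-\hat\rho)^{-1}\ln(1/d)$. You should either verify the resulting inequality against $\hat s$ and $\epsilon^{\gamma_1}\hat K$, or divide only by $1-e^{ik\omega}$ and push the $\eta$-dependence into the Taylor remainder about $\eta=0$, which is what the term $(\hat\sigma/\hat s)^{5}d$ in the stated estimate is designed to absorb.
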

The proof of this Lemma is similar to the Moser version of
the KAM proof for action-angle maps \cite{SiegelandMoser:1971}
with the difference being that the angle variable $y$ in this
proof is treated as a parameter. We refer the readers to \cite{SiegelandMoser:1971} (Chapter 3; Section 32) for the proof. We now use the result of this
Lemma to prove that at the end of the second coordinate transformation,
the size of action perturbation $Z$ is of order
$d_{0}^{\frac{3}{2}}$. To this end we apply the Lemma to
the mapping $M_{0}(\omega)$ defined in the domain:
\[
 D_{0}(\omega): |{\mathbf Im}\; x|< \hat{r}_0, |{\mathbf Im}\;
y|< \hat{r}_0, |z|< \hat{s}_0,
\]
where $D_0(\omega)$ correspond to the domain ${\cal A}_0(\omega)$
of the Lemma. By assumption, we have
\[|X|+|Y|+|Z|<d_{0} \;\;{\rm in}\;\; D_{0}(\omega).\]
Transforming the mapping $M_{0}(\omega)$ by the coordinate transformation
$ T_0(\omega)=T(\omega)$ provided by the Lemma, we obtain the
mapping $M_{1}(\omega)= T_{0}^{-1}(\omega) M_{0}(\omega) T_{0}(\omega)$
defined in the domain:
\begin{center}
$D_{1}(\omega): |{\mathbf Im} \;x|<\hat r_{1},\; |{\mathbf Im}
\;y|<\hat r_{1},\; |z|<\hat s_{1}$,
\end{center}
where  $D_1(\omega)$ correspond to the domain ${\cal A}_1(\omega)$
and $\hat r_{1}, \hat s_{1}$ corresponds to the parameter $\hat
\rho, \hat \sigma$ of the Lemma.
We define the following sequences
\begin{eqnarray*}
&\hat r_{n}=\frac{\hat r_{0}}{2}(1+\frac{1}{2^{n}}),\;\;\hat
s_{n}=d_{n}^{\frac{11}{50}}, \hat s_{0}=\hat{s}, \;\;d_{n+1}=\hat
r_0^{-\hat \chi}\hat
c_{7}^{n+1}d_{n}^{\frac{6}{5}}\epsilon^{-2\gamma_1}, \;\;\hat
\chi=2\hat \mu+3 \nonumber\\&\hat r_{0}=r-\delta,\;\;\; \hat
c_{7}>3,\;\;d_{0}<\hat c_{7}^{-20}\hat r_0^{7\hat
\chi}\epsilon^{14\gamma_1}.
\end{eqnarray*}
For the above sequences to be well defined we require that
$\gamma_1<\frac{1}{5}$. We need to check whether these sequences
satisfies the inequality (\ref{s5}). Towards this we have,

\begin{eqnarray*}
(\frac{\hat s_{n+1}}{\hat
s_{n}})^{\frac{50}{11}}=\frac{d_{n+1}}{d_{n}}=\hat
c_{7}^{n+1}d_{n}^{\frac{1}{5}}=e_{n}^{\frac{1}{5}}\hat
c_{7}^{-5}\hat r_0^{\hat\chi},
\end{eqnarray*}
where $e_n=\hat r_0^{-5\hat \chi} \hat c_7^{5(n+6)}\epsilon^{10
\gamma_1} d_n$ and $e_{n+1}=e_{n}^{\frac{6}{5}}$. Since $d_0<\hat
c_7^{-20}\hat r_0^{7\hat \chi}\epsilon^{14\gamma_1}$, we have $e_0<1$
and
\[\frac{s_{n+1}}{s_n}<\hat c_7^{\frac{-11}{10}}<\frac{1}{3},\;\;\;\;{\rm since} \;\; \hat c_7>3.\]
The inequality
$\hat s_{n}=d_{n}^{\frac{11}{50}}<\frac{\hat r_{n}-\hat
r_{n+1}}{4}, d_{n}<\frac{\hat s_{n}}{6},\;\; {\rm and}\;\;\hat
\vartheta<\frac{1}{7}
$ can be satisfied by taking $d_{0}$ sufficiently small and using
the fact that
\begin{eqnarray}
\hat{r}-\hat{\rho}=\hat r_{n}-\hat r_{n+1}=\hat r_{0}2^{-n-2},\;\;\;
\hat s_{n}=d_{n}^{\frac{11}{50}}\nonumber.
\end{eqnarray}
Using $d_{n+1}=\hat r_0^{-\hat \chi}\hat
c_{7}^{n+1}d_{n}^{\beta}\epsilon^{-2\gamma_1}$ with
$\beta=\frac{6}{5}$, we have
\[
d_{n+1}=\hat r_0^{-\hat \chi(\sum_{k=1}^n \beta^k)}\hat
c_{7}^{(n+1+n\beta+(n-1)\beta^2+...+\beta^n)}\epsilon^{-2\gamma_1(\sum_{k=1}^n \beta^k)}d_{0}^{\beta^{n+1}}.
\]
We want that after finitely many coordinate transformations $d_{n+1}<d_0^{\frac{3}{2}}$. Using the fact that
$\epsilon=d_0^{\frac{1}{2}}$ and $d_0<\hat
c_7^{-20}\hat r_0^{7\hat \chi}\epsilon^{14\gamma_1}$, it follows for $n=5$ that $d_6<d_0^{\frac{3}{2}}$ and we get,

\begin{eqnarray}
|\hat{X}_{6}|+|\hat{Z_{6}}|
<a_4\Big ( 2^{7(2\hat \mu+3)}\hat
c_{7}^{-6}d_{5}^{\frac{1}{50}}+ 2^{7(2\hat \mu+3)}\hat
c_{7}^{-6}d_5^{\frac{29}{50}}+ \hat r_0^{\hat \chi}2^{7(2\hat
\mu+3)}\hat
c_{7}^{-10}+ (\frac{d_{6}}{d_{5}})^{\frac{1}{10}}\Big )d_{6}.\nonumber
\end{eqnarray}
The coefficient multiplying $d_6$ can be made less than one by
choosing $\hat c_{7}$ sufficiently large, $\gamma_1$ sufficiently
small, and noticing that $\frac{d_{6}}{d_{5}}<1$ to give us
\begin{eqnarray*}
|\hat{X}_{6}|+|\hat{Z_{6}}|<d_{6}<d_{0}^{\frac{3}{2}}.
\end{eqnarray*}

\subsection{Infinite sequence of coordinate transformation}

At this stage of infinite sequence of coordinate transformations,
our aim is to decrease the size of all the three perturbations
simultaneously. In both the KAM proof for Moser twist map
\cite{SiegelandMoser:1971}, and the action-angle-angle maps \cite{chengsun1},
the size of all the perturbations decreases simultaneously with
the same estimates on the perturbations at each step of the
infinite sequence of coordinate transformations. In our proof, due
to the degenerate nature of the angle $y$, we require that the
size of action perturbation $Z$ is always $d^{\frac{1}{2}}$ order smaller
than the angle perturbations. This requires us to estimate the
size of action perturbation $Z$ separately from the size of the
angles perturbations $X$ and  $Y$.

Now we have a problem which is different from the Moser version of
the KAM proof for the twist maps, but similar to the one
faced in proving the KAM theorem for action-angle-angle maps. The
problem is due to unequal numbers of action and angle variables.
Due to this problem, it is not possible to predict which tori will
survive the perturbation and hence at this stage it becomes
necessary to parameterize the mapping by $\omega$.

We denote the mapping obtained after the second coordinate
transformation $M_{6}(\omega)$ by ${\cal M}_{0}(\omega)$. We are
using the same notation for the perturbation $X, Y$, and $Z$ as at the
beginning of the second coordinate transformation i.e., we define
$X:=\hat X_{6}, Y:=\hat Y_{6} , Z:=\hat Z_{6}$  and again the
parametrization on $X, Y, Z$ and $g_0$ are chosen such that
$X(x,y,z,\omega)=X(x,y,z+\omega)$ and
$g_0(z,\omega)=g_0(z+\omega)$ and so on for $Y$ and $Z$.
 So we have

\begin{equation}
{\cal M}_{0}(\omega)=\left\{\begin{array}{ccl}
x_{1}&=&x+\omega+z+X(x,y,z,\omega)\\ y_{1}&=&y+\epsilon
g_{0}(z,\omega)+Y(x,y,z,\omega)\\ z_{1}&=&z+Z(x,y,z,\omega),
\end{array}
\right.\label{inftycooreq}
\end{equation}
with $|X|<d_{0}^{\frac{3}{2}}<d_{0}$, $|Y|<d_{0}$, and
$|Z|<d_{0}^{\frac{3}{2}}$ defined in domain ${\cal D}_{0}(\omega):
|{\mathbf Im}\;x|< r_0\leq \hat r_6,\;\;|{\mathbf Im}\;y|< r_0\leq
\hat r_6,\;\; |z|< s_0\leq \hat s_6$.  To account for the
shrinking size of the connected components of domain $G^{'}(\epsilon)\ni \omega$ with
decreasing $\epsilon$, in this step of coordinate transformation
we require $(\omega,\epsilon g_{0}(\omega))$ to satisfy infinitely
many inequalities of the form:

\begin{eqnarray}
|k_{1}\omega+\epsilon k_{2}g_0(0,\omega)+2\pi
n|\geq\left\{\begin{array}{ccl}
\epsilon^{\gamma_2}K|k|^{-\mu}&{\rm if}& k_{1}\neq
0\\\epsilon^{1+\gamma_2} K|k|^{-\mu}&{\rm if}& k\neq
0\end{array}\right. \forall (k_{1},k_{2},n)\in {\mathbf
Z}^{3}\setminus \{0\},\label{d1}
\end{eqnarray}
where $|k|=|k_1|+|k_2|$, $K$ some positive constant, $\mu\geq 5$
and $\gamma_2$ is sufficiently small positive constant i.e.,
$0<\gamma_2<<1$. Now we use an infinite sequence of coordinate
transformations similar to the one used in
\cite{SiegelandMoser:1971} but with some modification. We have following induction Lemma for the third and final step of coordinate transformation.
\begin{lemma}\label{lemmainduction}
There exists a coordinate transformation ${\cal U}(\omega)$ of the
form:
\[{\cal
U}(\omega)=\{x=\varphi+U(\varphi,\psi,\eta,\omega),\;\;\;\;\;
y=\psi+V(\varphi,\psi,\eta,\omega),\;\;\;\;\;
z=\eta+W(\varphi,\psi,\eta,\omega)\;\;\;\;\;
\]
such that the mapping ${\cal M}_{0}(\omega)$ (Eq. \ref{inftycooreq}),
defined in the domain:
\[{\cal B}_0(\omega): |{\mathbf Im}\;x|<r,\;\;\;\;\; |{\mathbf Im}\;y|<r,\;\;\;\;\; |z|<s,\]
with $|X|+|Y|<d$ and $|Z|<d^{\frac{3}{2}}$ takes the form ${\cal
M}(\omega)={\cal U}^{-1}(\omega){\cal M}_{0}(\omega){\cal
U}(\omega)$. The mapping ${\cal M}(\omega)$
\begin{equation}
{\cal M}(\omega)=\left\{\begin{array}{lll}
\varphi_{1}=\varphi+\omega+\eta+\Phi(\varphi,\psi,\eta,\omega)\\\nonumber
\psi_{1}=\psi+\epsilon
g_{0}(\eta,\omega)+g_{1}(\eta,\omega)+\Psi(\varphi,\psi,\eta,\omega)\\\nonumber
\eta_{1}=\eta+H(\varphi,\psi,\eta,\omega)
\end{array}
\right.
\end{equation}
is defined in the following smaller domain:
\[{\cal B}_{1}(\omega): |{\mathbf Im}\;\varphi| < \rho\;\;|{\mathbf Im}\;\psi| < \rho\;\;|\eta| < \sigma,\]
with $0<\rho<r, 0<\sigma<s$. Now assume that
\begin{eqnarray}
0<r\leq \hat
r_{6},\;\;\;0<3\sigma<s<d^{\frac{1}{2}}(r-\rho),\;\; d<\frac{s}{2},\;\;\;\vartheta d^{\frac{1}{2}}<\frac{\vartheta_2 d^{\frac{1}{2}}}{s}<\frac{1}{8},\end{eqnarray}
\[{\rm  where}\;\;\vartheta=\frac{b_{1}}{K^{2}}(r-\rho)^{-2\mu-4}\epsilon^{-2\gamma_2}\frac{d^{\frac{1}{2}}}{s},\;\;\;\vartheta_2=\frac{b_{1}}{K^{4}}(r-\rho)^{-4\mu-4}\epsilon^{-2\gamma_2}d^{\frac{1}{2}}\] and $b_{1}$ is a positive constant
independent of the domain and depends only on $\mu$. Under the above assumptions, it follows that  ${\cal M}(\omega)$ is well defined in ${\cal B}_{1}(\omega)$
and there are following estimates:

\[|U|+|V|<
\frac{b_{1}}{K^{2}}(r-\rho)^{-2\mu-4}
\epsilon^{-2\gamma_2}d^{\frac{1}{2}},\;\;\;
|W|<\frac{b_{1}}{K}(r-\rho)^{-\mu-2}\epsilon^{-2\gamma_2}\frac{d^{\frac{3}{2}}}{\epsilon}
\]
\begin{eqnarray}\label{estinf}
|\Phi|+|\Psi|&<& b_6
\Big (\frac{(r-\rho)^{-2\mu-5}}{K^2}\epsilon^{-2\gamma_2}
d^{\frac{1}{2}}s
+\frac{(r-\rho)^{-2\mu-4}}{K^2}\epsilon^{-2\gamma_2}\frac{d^{\frac{1}{2}}}{s}|H|\nonumber\\&+&\frac{(r-\rho)^{-2\mu-4}}{K^2}\epsilon^{-2\gamma_2}\frac{d^2}{s}
+
\frac{(r-\rho)^{-4\mu-8}}{K^4}\epsilon^{-4\gamma_2}\frac{d^{\frac{5}{2}}}{s^2}\Big)
\end{eqnarray}

\begin{eqnarray*}
|H|&<&
b_{5}\Big (\frac{(r-\rho)^{-2\mu-5}}{K^2}\epsilon^{-2\gamma_2}d
s+\frac{(r-\rho)^{-4\mu-8}}{K^4}\epsilon^{-4\gamma_2}
\frac{d^{\frac{5}{2}}}{s}\nonumber\\&+&\frac{(r-\rho)^{-6\mu-13}}{K^6}\epsilon^{-6\gamma_2}\frac{d^{\frac{7}{2}}}{s^2}
+\frac{(r-\rho)^{-2\mu-4}}{K^2}\epsilon^{-2\gamma_2}\frac{d^{5/2}}{s}+(\frac{\sigma}{s})^{3}d^{\frac{3}{2}}\Big )
\end{eqnarray*}
where $g_{1}(0,\omega)=-\epsilon g_{0\eta}(0,\omega)\tilde
X(0,\omega)+\tilde Y(0,\omega)$, $\tilde X$ and $\tilde Y$ are
average value of $X$ and $Y$ respectively and $b_5, b_6$ are
positive constants independent of the domain. The functions $g_0(0,\omega)$ and $g_1(0,\omega)$ satisfy the
following new Diophantine conditions:

\begin{eqnarray*}
|k_{1}\omega+k_{2}(\epsilon
g_0(0,\omega)+g_1(0,\omega))+2\pi n|\geq\left\{\begin{array}{ccl}
\epsilon^{\gamma_2}\frac{K}{2}|k|^{-\mu}&{\rm if}& k_{1}\neq
0\\\epsilon^{1+\gamma_2} \frac{K}{2}|k|^{-\mu}&{\rm if}& k\neq
0\end{array}\right. \forall (k_{1},k_{2},n)\in {\mathbf
Z}^{3}\setminus \{0\}
\end{eqnarray*}

\end{lemma}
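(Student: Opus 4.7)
The plan is to carry out one step of a Newton-type iteration in the spirit of the Moser twist-map KAM proof \cite{SiegelandMoser:1971}, but with two modifications forced by the degenerate angle $y$: the action perturbation $Z$ must be treated separately from the angle perturbations $X,Y$ (reflecting the asymmetric hypotheses $|X|+|Y|<d$, $|Z|<d^{3/2}$), and an additional drift $g_1(\eta,\omega)$ must be extracted from the averaged angle perturbation so that the frequency vector updates as $(\omega,\epsilon g_0)\mapsto(\omega,\epsilon g_0+g_1)$ and new Diophantine conditions can be re-imposed on the surviving $\omega$.

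First I would substitute $x=\varphi+U$, $y=\psi+V$, $z=\eta+W$ into $\mathcal{U}\circ\mathcal{M}=\mathcal{M}_0\circ\mathcal{U}$ and, after matching principal parts, arrive at the linearized homological equations
\begin{align*}
U(\varphi+\omega+\eta,\psi+\epsilon g_0,\eta)-U(\varphi,\psi,\eta)&=X(\varphi,\psi,\eta,\omega)+W(\varphi,\psi,\eta),\\
V(\varphi+\omega+\eta,\psi+\epsilon g_0,\eta)-V(\varphi,\psi,\eta)&=Y(\varphi,\psi,\eta,\omega)+\epsilon g_{0\eta}(\eta,\omega)W(\varphi,\psi,\eta)-g_1(\eta,\omega),\\
W(\varphi+\omega+\eta,\psi+\epsilon g_0,\eta)-W(\varphi,\psi,\eta)&=Z(\varphi,\psi,\eta,\omega).
\end{align*}
Expanding in Fourier series in $(\varphi,\psi)$ and truncating at a cutoff $N\sim(r-\rho)^{-1}\log(1/d)$, the third equation is solved directly; its coefficients have denominators $e^{i(k_1(\omega+\eta)+k_2\epsilon g_0)}-1$, controlled at $\eta=0$ by the Diophantine inequality~(\ref{d1}) and for nonzero $\eta$ by analytic dependence on $\eta$ together with $|\eta|<\sigma$. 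The back-substituted $W$ then appears as a forcing term in the first two equations, which are solved in the same way. Solvability of the zero-Fourier-mode of the $V$-equation forces the choice $g_1=\tilde Y-\epsilon g_{0\eta}\tilde X$, which is exactly the drift in the statement.

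The estimates then come from Cauchy bounds on the resulting Fourier sums, weighted by the small-divisor lower bounds in~(\ref{d1}). The key asymmetry to track is that when $k_1=0$ the small divisor degrades to $\epsilon^{1+\gamma_2}K|k|^{-\mu}$, costing a factor of $1/\epsilon$; since $|Z|<d^{3/2}$, this yields $|W|\lesssim d^{3/2}/\epsilon$ as claimed, whereas $|U|+|V|\lesssim d^{1/2}$ comes not from $X,Y$ directly (which would give only $d$) but from the $W$-source in the first two equations, amplified once more by $\epsilon^{-\gamma_2}$ and by the $\epsilon g_{0\eta}W$ term. With $U,V,W$ under control, the smallness hypotheses $\vartheta d^{1/2}<1/8$ and $3\sigma<s$ permit the inverse function theorem, confirming that $\mathcal{U}$ is an analytic diffeomorphism on $\mathcal{B}_1(\omega)$. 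The new perturbations $\Phi,\Psi,H$ are then the residuals from substituting $\mathcal{U}$ back into the full, non-linearized conjugation identity; they split into (a) the high-mode tail discarded in the truncation, (b) quadratic errors in $(U,V,W)$, (c) the cross-term from expanding the argument-shift by $W$ beyond first order, and (d) the Taylor-in-$\eta$ remainder. Each contributes precisely one of the four summands in~(\ref{estinf}).

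The main obstacle, and the point where one cannot simply copy \cite{SiegelandMoser:1971} or \cite{chengsun1}, is preserving the two-scale structure $|\text{angle pert.}|=d$, $|\text{action pert.}|=d^{3/2}$ across the iteration while the degenerate small divisors inject factors of $1/\epsilon$. One must verify not only that $|H|$ retains the form $d^{3/2}\cdot(\text{small})$ so that the next iterate again satisfies the asymmetric hypothesis, but also that the updated Diophantine condition with constant $K/2$ and frequency $\epsilon g_0(0,\omega)+g_1(0,\omega)$ continues to hold on a large-measure subset of $\omega$. Since $|g_1|\lesssim\epsilon d$ is far smaller than the current Diophantine gap $\epsilon^{\gamma_2}K|k|^{-\mu}$, a standard resonance-zone perturbation argument, making essential use of the second twist condition $g_0''\geq c_1>0$ (Assumption~\ref{assumption_3}) to convert the Diophantine inequality into a measure estimate, delivers the last point and prepares the stage for the infinite iteration.
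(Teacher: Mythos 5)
Your overall strategy coincides with the paper's: the authors do not actually write this proof out, but defer to the Moser scheme of \cite{SiegelandMoser:1971} (Chapter 3, Section 32) with the two modifications you identify, and your homological equations, the drift $g_1=\tilde Y-\epsilon g_{0\eta}\tilde X$ (note this also uses the normalization $\tilde W=-\tilde X$ forced by solvability of the zero mode of the $U$-equation, not of the $V$-equation alone), the separate $1/\epsilon$ accounting for the degenerate divisors with $k_1=0$, and the re-imposition of the Diophantine condition with $K/2$ via the second twist condition all match what the paper intends.

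There is, however, one genuine gap: you never invoke the intersection property (Assumption \ref{assumption_2}), and without it the lemma fails. The homological equation $W(\varphi+\omega+\eta,\psi+\epsilon g_0,\eta)-W(\varphi,\psi,\eta)=Z$ cannot remove the $(k_1,k_2)=(0,0)$ Fourier mode of $Z$: the mean value $\tilde Z(\eta)$ of the action perturbation over the angles is an obstruction to solvability, it is a priori of the full size $d^{\frac{3}{2}}$ --- the same order as the hypothesis $|Z|<d^{\frac{3}{2}}$ --- and it lands directly in $H$. If $\tilde Z$ were left uncontrolled, $|H|$ would be no smaller than $|Z|$, the two-scale hypothesis could not be propagated to the next iterate, and the induction would collapse (geometrically, the torus could simply drift away in the action direction). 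The repair, and the only place Assumption \ref{assumption_2} enters the whole construction, is that the approximate invariant torus must intersect its image, so the action displacement $Z$ vanishes at some point of the torus; hence $|\tilde Z|\leq \max |Z-\tilde Z|$, and the right-hand side is precisely the oscillating part that the transformation has already made small. Together with a re-centering in $\eta$ this is what produces the $(\frac{\sigma}{s})^{3}d^{\frac{3}{2}}$ term in the bound for $|H|$ --- the one summand carrying no small-divisor factors --- which your item (d) attributes to a generic Taylor-in-$\eta$ remainder without identifying the mechanism that makes the zeroth-order term of that expansion small. You should state explicitly where the intersection property is used; the rest of your sketch can then be carried through as written.
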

Proof of this Lemma follows along the similar lines for the
Moser version of the KAM proof for the action-angle map
\cite{SiegelandMoser:1971} (Chapter 3; Section 32). Due to unequal numbers of action and
angle variables we have a problem which is different from the KAM
proof for the twist map. The term $g_{1}(\eta,\omega)$ in the
mapping ${\cal M}(\omega)$ of the Lemma gives rise to the shift in
frequency of the degenerate angle. In general at the $(n+1)$ step
of the coordinate transformation there is a frequency drift from
$(\omega,\epsilon g_{0}(0,\omega)+\sum_{j=1}^{n}g_{j}(0,\omega))$ to
$(\omega,\epsilon g_{0}(0,\omega)+\sum_{j=1}^{n+1}g_{j}(0,\omega))$
similar to the case  in \cite{chengsun1}. In order to compensate
for this frequency drift we need to broaden the set of admissible
values of $\omega$. This can be achieved by allowing the constant
$K$ in inequalities (\ref{d1}) to decrease at each step of the
coordinate transformation. However decreasing the size of $K$ will
lead to increase in the size of estimates for the perturbations
and hence the scheme to make the mapping closer to the double
twist mapping might be a failure. We show that this is not always
the case and there exists a nonempty set $S(\omega)\subset G$ on
which corresponding $K$ decrease at most like power of $K_0$
$(K_n=\frac{K_0}{2^n})$ so that the size of the perturbations
decreases exponentially. To prove this we employ the strategy similar to that in \cite{chengsun1} with
the difference that while the strategy in \cite{chengsun1} is developed for action-angle-angle map with no degeneracy in angle, we extend it to the case of degenerate angle. More specifically there are following differences between our proof and proof technique developed in \cite{chengsun1}; 1) The averaging transformation is not needed in \cite{chengsun1}; 2) The intermediate sequence of transformations is not needed in \cite{chengsun1};
3) The ``Cantor set" calculations are substantially modified;
4) Whitney theory is used directly instead of doing it from scratch.
Before explaining this strategy, we prove the following Lemma similar to the one in \cite{chengsun1} except for the fact that the estimates in this Lemma are derived for the case of degenerate angle.

\begin{lemma}\label{cantorlemma}
Let $\epsilon>0$ be fixed and assume $\epsilon g_0(z)\in
C^{2}, g_0^{''}(z)\geq c_1>0$. Then the set $S(\omega)$,
where

\[S(\omega)=\{\omega \in G^{'}:|k\cdot \Omega +2\pi n|\geq\epsilon^{1+\gamma_2} K|k|^{-\mu},(\mu \geq 5),(k,n)\in {\mathbf
Z}^{3}\setminus\{(0,0,0)\}\} \]is a Cantor set with the Lebesgue
measure
$\mu_l(S(\omega))>\mu_l(G^{'})-c_{3}\left(\frac{\epsilon^{\gamma_2}K}{c_1}\right)^{\frac{1}{2}}$
where, $(k\cdot\Omega)=k_{1}\omega+\epsilon k_{2}g_0(0,\omega)$, $c_3$ is a positive constant, and
$\mu_l$ is the Lebesgue measure.
\end{lemma}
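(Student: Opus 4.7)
The plan is to bound the Lebesgue measure of the complement $G'\setminus S(\omega)$, which decomposes as the union over $k=(k_1,k_2)\in\mathbb{Z}^2\setminus\{0\}$ and $n\in\mathbb{Z}$ of the resonant slivers
\[
R_{k,n}=\big\{\omega\in G':|\phi_k(\omega)+2\pi n|<\epsilon^{1+\gamma_2}K|k|^{-\mu}\big\},\qquad \phi_k(\omega):=k_1\omega+\epsilon k_2 g_0(0,\omega),
\]
and to show that this union has measure at most $c_3(\epsilon^{\gamma_2}K/c_1)^{1/2}$. Once this estimate is in hand, the Cantor set property is routine: $S(\omega)$ is closed as a countable intersection of closed sets, it is nowhere dense because the resonant frequencies are dense in $G'$, and the measure bound shows that $S(\omega)$ still has positive Lebesgue measure provided $\epsilon^{\gamma_2}K$ is sufficiently small compared to $c_1$ and the length of $G'$.

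The per-resonance estimate splits into two cases. If $k_2=0$ (so $k_1\ne 0$), then $\phi_k$ is affine with slope $|k_1|\ge 1$ and $\mathrm{meas}(R_{k,n})\le 2\epsilon^{1+\gamma_2}K|k_1|^{-\mu-1}$. If $k_2\ne 0$ the first derivative $\phi_k'(\omega)=k_1+\epsilon k_2 g_0'(0,\omega)$ can vanish, so instead I would exploit the second twist condition to get the uniform lower bound $|\phi_k''|=\epsilon|k_2||g_0''|\ge \epsilon c_1|k_2|$ and invoke the classical curvature lemma (if $|\phi''|\ge\alpha$ on an interval then $\mathrm{meas}\{|\phi|<\beta\}\le 4\sqrt{\beta/\alpha}$) to obtain
\[
\mathrm{meas}(R_{k,n})\le 4\sqrt{\epsilon^{\gamma_2}K/c_1}\,|k_2|^{-1/2}|k|^{-\mu/2}.
\]
This is precisely where the hypothesis $g_0''\ge c_1>0$ is consumed and the $1/2$--power in the conclusion is produced; in particular, the fact that $\phi_k'$ can vanish is the essential new obstruction relative to the non-degenerate setting and forces us into the square-root regime.

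Finally I would sum over $n$ and $k$. Since $|g_0|\le 1$, the image $\phi_k(G')\subset[a,b]$ has length at most $|k_1|(b-a)+2\epsilon|k_2|$, so at most $O(|k_1|+\epsilon|k_2|+1)$ integer shifts $n$ yield a nonempty $R_{k,n}$. The $k_2=0$ contribution then aggregates to $O(\epsilon^{1+\gamma_2}K)$, which is subdominant. The $k_2\ne 0$ contribution aggregates to
\[
C\sqrt{\epsilon^{\gamma_2}K/c_1}\sum_{k_2\ne 0}|k_2|^{-1/2}\sum_{k_1\in\mathbb{Z}}(|k_1|+\epsilon|k_2|+1)(|k_1|+|k_2|)^{-\mu/2},
\]
whose convergence I expect to be the main technical hurdle and is the reason for the Diophantine exponent $\mu\ge 5$: one has to split the inner sum according to whether $|k_1|$ dominates $|k_2|$ or vice versa, and check that the $|k_2|^{-1/2}$ loss from the curvature estimate together with the $|k_1|$ factor from counting $n$--shifts are both absorbed by the decay $|k|^{-\mu/2}$. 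Adding the two contributions yields the desired bound $\mathrm{meas}(G'\setminus S(\omega))\le c_3(\epsilon^{\gamma_2}K/c_1)^{1/2}$, and the Cantor description of $S(\omega)$ follows as indicated.
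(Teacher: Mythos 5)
Your proposal is correct and follows essentially the same route as the paper: decompose the complement of $S(\omega)$ into resonance zones indexed by $(k_1,k_2,n)$, use the second twist condition $g_0''\ge c_1$ to obtain a square-root measure bound for each zone, count the admissible $n$ linearly in $|k|$, and sum over $k$ using $\mu\ge 5$. The only difference is presentational: you invoke the standard sublevel-set (curvature) lemma for $|\phi_k''|\ge \epsilon c_1|k_2|$ directly, whereas the paper obtains the same $4\left(\delta/(\epsilon c_1)\right)^{1/2}$ bound geometrically by intersecting the graph of $\epsilon g_0$ with a strip of width $2\delta$ about the resonance line in the $(\omega_1,\epsilon\omega_2)$-plane and projecting onto the first coordinate.
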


\begin{proof}
For a fixed $(k_{1},k_{2},n)\in {\mathbf Z}^{3}$, consider the
lines
\[
l_1:\;\; k_{1}\omega_1+\epsilon k_{2}\omega_2+2\pi n=0,\;\;\;
l_2:\;\; k_{1}\omega_1+\epsilon k_{2}\omega_2+2\pi
n-\epsilon^{1+\gamma_2} K|k|^{-\mu}=0\] \[ l_3:\;\;
k_{1}\omega_1+\epsilon k_{2}\omega_2+2\pi n+\epsilon^{1+\gamma_2}
K|k|^{-\mu}=0.
\]
The minimum distance between the lines $l_1, l_2$ or $l_1, l_3$ in
the $(\omega_1,\epsilon \omega_2)$ plane is $\epsilon^{1+\gamma_2}
K|k|^{-\mu}(k_1^{2}+k_2^{2})^{-\frac{1}{2}}$. Consider the points
$(\omega_1,\epsilon \omega_2)$ in the set
\[
\Omega_1=\{(\omega_{1},\epsilon \omega_{2})\in {\mathbf R}^{2}:d((\omega_{1},\epsilon \omega_{2}),l_1)\geq \delta\},\\
\delta=\epsilon^{1+\gamma_2}
K|k|^{-\mu}(k_{1}^{2}+k_{2}^2)^{-\frac{1}{2}}.
\]
The points in $\Omega_1$ set will satisfy the inequality
(\ref{d1}) for a fixed $(k_{1},k_{2},n,K)$, where $d(\#,l)$ means
the distance to the line $l$. Let $\Omega_2={\mathbf
R}^{2}\setminus \Omega_1$, $\Gamma=$ graph $\epsilon g_0$, and
$T_{1}$ the projection over first component. The length of
$T_1|(\Gamma \cap {\Omega_2})$ is less than
$4(\frac{\delta}{\epsilon c_1})^{\frac{1}{2}}$. For a fixed $k\in
{\mathbf Z}^{2}$, if $(\omega_{1},\epsilon \omega_{2})$ is
restricted in the domain $R_{\omega}$:
\[
0\leq \min (\omega_{1},\epsilon \omega_{2})<\max
(\omega_{1},\epsilon \omega_{2})\leq a_{8},
\]
then the set $\Omega_2 \cup R_{\omega}$ is non empty only if
$|n|<\frac{a_{8}|k|}{2\pi}$. So the Lebesgue measure of the set
$S(\omega)$ is

\[\mu_l(S(\omega))>\mu_l(G^{'})-\frac{4a_{8}}{2\pi}\left(\frac{\epsilon^{\gamma_2}K}{c_1}\right)^{\frac{1}{2}}\sum_{k\in {\mathbf
Z}^{2}}|k|^{\frac{-\mu}{2}+1}(k_1^{2}+k_2^{2})^{-\frac{1}{4}},
\]
which is positive if $\mu\geq 5$ and $K$ is sufficiently small.
Now
\[
\frac{4a_{8}}{2\pi}\sum_{k\in
\mathbf{Z}^{2}}|k|^{\frac{-\mu}{2}+1}(k_1^2+k_2^2)^{-\frac{1}{4}}<\frac{4a_{8}}{2\pi}\sum_{k\in
\mathbf{Z}^{2}}|k|^{\frac{-\mu}{2}+1-\frac{1}{4}}=:c_3
\]
and the sum converge because $\mu\geq 5$ and $
\mu_l(S(\omega))>\mu_l(G^{'})-c_3(\frac{\epsilon^{\gamma_2}K}{c_1})^{\frac{1}{2}}$
\end{proof}

We now introduce a sequence of coordinate transformation ${\cal
U}_{n}(\omega)$ on a nonempty set $\tilde S_{n}\subset G'$:
\[{\cal M}_{n+1}(\omega)={\cal U}^{-1}_{n}(\omega){\cal M}_{n}(\omega){\cal U}_{n}(\omega),\;\;
{\rm with}\;\;\tilde S_{n}(\omega) \subset \tilde S_{n-1}(\omega).\]
By Lemma \ref{cantorlemma}, there exists a Cantor set
$S_{0}(\omega)=\tilde S_{0}(\omega)$ given by
\[S_{0}(\omega)=\{\omega\in G^{'}|(\omega,\epsilon g_{0}(0,\omega)) {\rm \;satisfies\; (\ref{d1})\; with\;} K_{0} {\rm
\;replacing\;} K\}.\] The Lebesgue measure of the set $S_{0}(\omega)$
is
$\mu_{l}(S_{0}(\omega))>\mu_l(G^{'})-c_{3}\left(\frac{\epsilon^{\gamma_2}K_{0}}{c_{1}}\right)^{\frac{1}{2}},
$
 where $\mu_l$ is the Lebesgue measure. Let $\tilde
S_{1}(\omega)=\bigcap_{j=0}^{1}S_{j}(\omega)$, where

\[S_{1}(\omega)=\{\omega\in G^{'}| (\omega,\epsilon g_{0}(0,\omega)+g_{1}(0,\omega)){\rm \; satisfies} \;(\ref{d1})
\;{\rm with }\; \frac{K_{0}}{2}\;{\rm  replacing }\; K \}.\]
In order to derive the Lebesgue measure of the set $S_1(\omega)$
we need $g_1(0,\omega)$ to be defined on the entire domain $G$.
However, $g_1$ is only defined on the set $\tilde S_0(\omega)$.
This problem can be solved using the Whitney extension theorem
\cite{whitney}. By using the Whitney extension theorem,
we can extend the perturbations $X, Y, Z$ and subsequent
perturbations $X_j, Y_j, Z_j$ coming from infinite sequence of
coordinate transformation to the entire domain $G$ w.r.t. variable $\omega$.
The proof for the extension follows along the lines of proof outlined in  \cite{chengsun3,stein}.
Since $X, Y$, and $Z$
are extended to the domain $G$, the function
$g_{1}$ is well defined for all values of $\omega \in G$ because
\[g_{1}(0,\omega)=-\epsilon g_{0\eta}(0,\omega)\tilde X(0,\omega)+\tilde Y(0,\omega),\]
where $\tilde X$ and $\tilde Y$ are the average values of $X$ and
$Y$ respectively.
We will use the same notation for the functions and its extension to the domain $G$ w.r.t. variable $\omega$ with the following estimate
\[\parallel X\parallel_{2,1,G}+\parallel Y \parallel_{2,1,G}< c_w d \;\;\; i=0,1,2,\]
where $c_{w}$ is the Whitney constant and is independent of the
domain. The notation $\parallel \cdot \parallel_{2,1,G}$ is used as a measure for the norm of the function and the second derivative of the function w.r.t. variable $\omega$ in the domain $G$ (For more details on the norm refer to \cite{stein}).  Differentiating $g_{1}(0,\omega)$ twice w.r.t.
$\omega$ we get,
\[\left |\frac{d^{2}}{d\omega^{2}}g_{1}(0,\omega)\right |< 5\sup (\epsilon g_{0\eta},\epsilon g_{0\eta \omega},\epsilon g_{0\eta \omega^2},1)c_{w}d < \epsilon \beta_{1}.\]
Hence
\[\frac{d^{2}}{d\omega^{2}}(\epsilon g_{0}+g_{1})>\epsilon (c_{1}-\beta_{1}),\;\;\;{\rm and }\;\;\;
\mu_l(S_{1}(\omega))>\mu_l(G^{'})-c_{3}\left(\frac{K_{0}\epsilon^{\gamma_2}}{2(c_{1}-\beta_{1})}\right)^{\frac{1}{2}}.\]
The measure of the set $S_1(\omega)$ is obtained from Lemma
\ref{cantorlemma} by applying the results  to $k\cdot
\Omega=k_1\omega+k_2(\epsilon g_0+g_1)$. Now
$\tilde S_{1}(\omega)=S_{0}(\omega)\bigcap S_{1}(\omega)$ and
\[\mu_l(\tilde S_{1}(\omega))>\mu_l(G^{'})-c_{3}\left(\frac{K_{0}\epsilon^{\gamma_2}}{c_{1}}\right)^{\frac{1}{2}}
\left(1+\left(\frac{c_{1}}{2(c_{1}-\beta_{1})}\right )^{\frac{1}{2}}\right).\]
For $K_{0}$ and $\beta_{1}$ sufficiently small $\mu_l(\tilde
S_{1}(\omega))$ is positive. We obtain the following
expression for $g_{j+1}$ by induction on $g_1$ and its derivation
is similar to that of $g_1$
 \[g_{j+1}(0,\omega)=-\left(\epsilon g_{0\eta}(0,\omega)+\sum_{l=1}^{j}g_{l\eta}(0,\omega)\right)\tilde X_{j}(0,\omega)+\tilde Y_{j}(0,\omega),\]
where $X_{j}$ and $Y_{j}$ are extended to interval $G$ by using Whitney's extension
 with the following  estimates
\[\parallel X_{j}\parallel_{2,1,G}+\parallel Y_{j}\parallel_{2,1,G} < c_w d_j .\]
Assume that there exists a positive constant $c_{5}$ such that
\begin{eqnarray}
\sup \left |\frac{d^{v}}{d\omega^{v}}(\epsilon
g_{0\eta}(0,\omega)+\sum_{l=1}^{j}g_{l\eta}(0,\omega))\right |<c_{5}\;\;\;\;\;
{ v}=0,1,2.\label{lll}
\end{eqnarray}
The existence of such a positive constant $c_5$ can be proved as follows:
\[|\epsilon g_{0\eta}|\leq 1\;\;\;{\rm for}\;\; \eta\in G,\;\;{\rm and}\;\;|\epsilon g_{0\eta\omega^2}|\leq 1 \]
\[|g_1|=|\epsilon g_{0\eta}\tilde X_0|+|\tilde Y_0|\leq 2d_0,\;\;\; |g_{1\eta}|\leq \frac{2 d_0}{s_0}
\;\;\;{\rm for}\;\; |\eta|<s_0, \;\;{\rm and}\;\;|g_{1\eta
\omega^2}|\leq c_w\frac{2 d_0}{s_0}\]
\[|g_{j+1}|\leq 2 d_j \Pi_{i=1}^{j-1}(1+\frac{d_i}{s_i}),\;\;\;|g_{j+1,\eta}|\leq
\frac{2 d_j}{s_j} \Pi_{i=1}^{j-1}(1+\frac{d_i}{s_i})\;\;{\rm
for}\;\;|\eta|<s_j\;\;{\rm and}\;\;\] \[ |g_{j+1,\eta
\omega^2}|\leq \frac{ c_w 2 d_j}{s_j}
\Pi_{i=1}^{j-1}(1+\frac{d_i}{s_i}),\] so by choosing $d_0$
sufficiently small it is possible to find the constant $c_5$ such that (\ref{lll}) is true. Now
setting $c_{6}=5\max(c_{5},1)$ we get
\[\left |\frac{d^{2}}{d\omega^{2}}g_{j+1}(0,\omega)\right |<c_{6}c_wd_{j}=\epsilon \beta_{j+1},\;\;\;\;\sum_{l=1}^{j+1}\beta_l=\frac{c_6
c_w}{\epsilon}\sum_{l=1}^{j+1}d_{l-1}\] and this can be made less
than $\frac{c_1}{2}$, if we choose $d_0$ sufficiently small. So we have
$\sum_{l=1}^{j+1}\beta_l<\frac{c_{1}}{2}$ and then following
inequality holds
\[\frac{d^{2}}{d\omega^{2}}(\epsilon g_{0\eta}(0,\omega)+\sum_{l=1}^{j+1}g_{l\eta}(0,\omega))>\epsilon c_{1}-\epsilon \sum_{l=1}
^{j+1}\beta_{l}>\epsilon \frac{c_{1}}{2}.\]
By defining $\tilde S_{j+1}(\omega)=\bigcap_{l=0}^{j+1}S_{l}(\omega)$,
where
\begin{eqnarray*} S_{j+1}(\omega)=\{\omega\in
G^{'}|(\omega,\epsilon
g_{0}(0,\omega)+\sum_{l=1}^{j+1}g_{l}(0,\omega))\;\; {\rm
satisfies \;(\ref{d1})\; with}\;\;
K_{j+1}=K_{0}/2^{j+1} \;{\rm in\;place\;of}\; K \},
\end{eqnarray*}
we obtain,
\begin{eqnarray*}
\mu_l(S_{j+1}(\omega))>\mu_l(G^{'})-c_{3}\left(\frac{K_{0}\epsilon^{\gamma_2}}{(c_{1}-\sum_{l=1}^{j+1}\beta_{l})2^{j+1}}\right)^{\frac{1}{2}}\end{eqnarray*}
\begin{eqnarray*}
\mu_l(\tilde
S_{j+1}(\omega))&>&\mu_l(G^{'})-c_{3}\left(\frac{K_{0}\epsilon^{\gamma_2}}{c_{1}}\right)^{\frac{1}{2}}\sum_{l=1}^{j+1}2^{-l/2}
\left(\frac{c_{1}}{c_{1}-\sum_{n=1}^{l}\beta_{n}}\right)\\
&>&\mu_{l}(G^{'})-c_{3}\left(\frac{K_{0}\epsilon^{\gamma_2}}{c_{1}}\right)^{\frac{1}{2}}\sum_{l=0}^{j+1}2^{-l/2+1}.
\end{eqnarray*}
The measure of set $\tilde S_{j+1}$ (i.e., $\mu_l(\tilde S_{j+1}(\omega))$) is positive if $K_{0}$ is
sufficiently small.  The total drift in the degenerate angle at the $j^{th}$ step of iteration is given by $\epsilon g_0(0,\omega)+\sum_{k=1}^j g_{j}(0,\omega)$ and in the limit as $j\to \infty$ we get $\epsilon g_0(0,\omega)+g^{*}(\omega,\epsilon)$, where $g^{*}(\omega,\epsilon):=\sum_{j=1}^\infty g_j(0,\omega)$.

Now we define a sequence similar to the one in
\cite{SiegelandMoser:1971}. Let
$r_{n},r_{n+1},s_{n},s_{n+1},d_{n},\frac{K_{0}}{2^{n}}$
correspond to parameter $r,\rho,s,\sigma,d_{0},K$ respectively.
Setting
\[
r_{n}=\frac{r_0}{2}(1+\frac{1}{2^{n}}),\;\;\;K_{n}=\frac{K}{2^{n}},\;\;\;
 s_{n}=d_{n}^{\frac{11}{16}},\;\;\; s_{0}=\hat s_{6},\;\;\;
d_{n+1}=r_{0}^{-\chi}c_{7}^{n+1}\epsilon^{-6\gamma_2}d_{n}^{\frac{9}{8}}
\]
 where $c_{7}>2$, $\chi=2\mu+5$ are suitable constants,
$r_{n}$ converges to $\frac{r_{0}}{2}$, and $d_{n}$ converges to zero
provided $d_{0}$ is chosen sufficiently small. The sequence
$e_{n}=r_{0}^{-8\chi}c_{7}^{8(n+9)}\epsilon^{-48\gamma_2}d_{n}$
satisfies $e_{n+1}=e_n^{\frac{9}{8}}$ and hence
 converges to zero if we take
$0< d_{0}^{1-24 \gamma_2}<r_{0}^{8\chi}c_{7}^{-72}$. The
inequality $3\sigma<s$ follows from
\begin{center}
$(\frac{s_{n+1}}{s_{n}})^{\frac{16}{11}}=\frac{d_{n+1}}{d_{n}}=r_{0}^{-\chi}c_{7}^{n+1}d_{n}^{\frac{1}{8}}=e_{n}^{\frac{1}{8}}
c_{7}^{-8}<\frac{1}{c_{7}^{8}}$,
$\;\;\frac{s_{n+1}}{s_{n}}<\frac{1}{c_{7}^{5.5}}<\frac{1}{3}$, and
$r_{n}-r_{n+1}=r_{0}2^{-n-2}$.
\end{center}
Now we will use induction Lemma to show that
$|H_{n+1}|<d_{n+1}^{\frac{3}{2}}$ and
$|\Phi_{n+1}|+|\Psi_{n+1}|<d_{n+1}$. By induction on second
inequality of (\ref{estinf}) and the fact that
$s_n=d_n^{\frac{11}{16}}$ we have


\begin{eqnarray}
|H_{j+1}|&<&
b_{5}\Big (\frac{r_{0}^{\chi/2}2^{\chi(j+2)+2j}}{K_{0}^{2}}c_7^{-\frac{3}{2}(j+1)}\epsilon^{7\gamma_2}
+\frac{r_{0}^{2-\frac{\chi}{2}}2^{2(\chi-1)(j+2)+4j}}{K_{0}^{4}}
c_7^{-\frac{3}{2}(j+1)}\epsilon^{5\gamma_2}d_{j}^{\frac{1}{8}}\nonumber\\
&+&\frac{r_{0}^{2-
\frac{3\chi}{2}}2^{-(j+2)(2-3\chi)+6j}}{K_{0}^{6}}c_7^{-\frac{3}{2}(j+1)}\epsilon^{3\gamma_2}d_{j}^{\frac{7}{16}}
+\frac{r_0^{1+\frac{\chi}{2}}2^{-(j+2)(1-\chi)+2j}}{K_0^2}c_7^{-\frac{3}{2}(j+1)}\epsilon^{7\gamma_2}d_j^{\frac{2}{16}}\nonumber\\&+&
(\frac{d_{j+1}}{d_j})^{\frac{9}{16}}\Big )d_{j+1}^{\frac{3}{2}}.
\end{eqnarray}
Since $d_{j}$ is bounded, the coefficient of $d_{j+1}$ can be made
less than 1 by choosing $c_{7}$ sufficiently large and hence we
have
\begin{equation}
|H_{j+1}|< d_{j+1}^{\frac{3}{2}}\label{esth}.
\end{equation}
Using the fact that $s=d^{\frac{11}{16}}$,
$|H|<d^{\frac{3}{2}}$ and  by induction on the first inequality of (\ref{estinf}) and using
equation (\ref{esth}), we have


\begin{eqnarray}
|\Phi_{j+1}|+|\Psi_{j+1}|&<&b_{4}\Big (\frac{2^{\chi(j+2)+2j}}{K_{0}^{2}}c_{7}^{-(j+1)}\epsilon^{4\gamma_2}d_{j}^{\frac{1}{16}}
+\frac{2r_{0}2^{-(1-\chi)(j+2)+2j}}{K_{0}^{2}}c_{7}^{-(j+1)}\epsilon^{4\gamma_2}d_{j}^{\frac{3}{16}}\nonumber\\
&+&\frac{r_{0}^{(-\chi+2)}2^{-(j+2)(2-2\chi)+4j}}{K_{0}^{4}}c_{7}^{-(j+1)}\epsilon^{2\gamma_2}\Big )d_{j+1}.
\end{eqnarray}
The coefficient of $d_{j+1}$ can be made less than one by choosing
$c_{7}$ sufficiently large and hence

\[|\Phi_{j+1}|+|\Psi_{j+1}|<d_{j+1}.\]
Thus there exists a positive constant $d^{*}=d^{*}(r, c_1, K_0,
\hat K_0, \bar K, \mu, \hat \mu, \bar \mu,\gamma_1,\gamma_2)$ such
that the theorem is true for $d_0\in(0,d^{*})$  with $g^{*}(\omega,\epsilon):=\sum_{j=1}^{\infty} g_j(0,\omega)$.

\section{Application to Hill's spherical vortex flow }\label{Hill}
A particularly important application of the theorem proven above is in the case of a three-dimensional, time-periodic, volume-preserving fluid flows \cite{mezicaaa,Solomonandmezic:2003}. A steady integrable example of a three-dimensional vortex structure was developed in \cite{mezicwigg} as
an extension (called swirling Hill's vortex) of the well-known Hill's spherical vortex flow (see Eq. \ref{example}). The swirling Hill's vortex, besides radial and axial velocity in three-dimensional polar coordinates, contains a strong swirl induced by a line vortex situated
at the $z$ axis.
Here we consider the volume-preserving time-dependent perturbation of the swirling Hill vortex (\ref{example}) with strong swirl.  In cylindrical coordinates
the equations of motion of fluid particles  are given as follows:
\begin{eqnarray}
\dot r &=& r z+ \sqrt{2r} \sin \theta \sin  \Omega (c) t\nonumber\\
\dot z &=& 1-2r^2- z^2 -z \sqrt{\frac{1}{2 r}}\sin \theta \sin  \Omega(c) t\nonumber\\
\dot \theta &=&\frac{2c}{r^2 } + \sqrt{2r}\cos \theta \sin  \Omega(c) t,\label{hills}
\end{eqnarray}
 where $\frac{\Omega(c)}{c}=: \omega$ is assumed to be of  $O(1)$ size. Under the assumption that the swirl $c>>1$ or $\frac{1}{c}\approx \epsilon$ and after rescaling the time $t=\frac{\tau}{c}$, we get the following time periodic perturbed flow equations in the transformed action-angle-angle  coordinates:
%
\begin{eqnarray}
\dot I&=& \epsilon F_I(I,\phi_1,\phi_2,\tau)\nonumber\\
\dot \phi_1&=&\epsilon \omega_1(I)+\epsilon F_{\phi_1}(I,\phi_1,\phi_2,\tau)\nonumber\\
\dot \phi_2&=&\omega_2(I)+\epsilon F_{\phi_2}(I,\phi_1,\phi_2,\tau)\label{sim_example_pert},
\end{eqnarray}
where the action-angle variables $(I,\phi_1)$ are obtained from $(r,z)$ and the second angle variable $\phi_2$ is obtained using the following transformation \cite{mezicwigg}
\begin{eqnarray}
\phi_2=\theta+\frac{\phi_1}{2 \pi}\int_0^{2\pi} \frac{2}{r^2(I,\phi_1)\omega_1(I)}d\phi_1-\int \frac{2}{r^2(I,\phi_1)\omega_1(I)}d\phi_1 \label{second_angle}.
\end{eqnarray}
We are interested in showing that the Poincare map constructed from the system (\ref{sim_example_pert}) satisfies the Assumption \ref{assumption_1} of the main theorem. Towards this goal, we write $\theta$ as $\theta=\phi_2-\varphi(I,\phi_1)$, where $\varphi$ is defined using (\ref{second_angle}) as follows:
\[
\varphi(I,\phi_1):=\frac{\phi_1}{2 \pi}\int_0^{2\pi} \frac{2}{r^2(I,\phi_1)\omega_1(I)}d\phi_1-\int \frac{2}{r^2(I,\phi_1)\omega_1(I)}d\phi_1.\]
The action-angle perturbations terms appearing in (\ref{sim_example_pert}) can be written as:
\begin{eqnarray}
F_{\phi_1[I]}= \sin (\phi_2-\varphi) \sin \omega \tau \left(\frac{\partial \phi_1 [I]}{\partial r} \sqrt{2 r(I,\phi_1)}-\frac{\partial \phi_1[I]}{\partial z}z(I,\phi_1)\sqrt{\frac{1}{2r (I,\phi_1)}}\right).\label{step}
\end{eqnarray}
Defining $G_{\phi_1[I]}:=\left(\frac{\partial \phi_1 [I]}{\partial r} \sqrt{2 r(I,\phi_1)}-\frac{\partial \phi_1[I]}{\partial z}z(I,\phi_1)\sqrt{\frac{1}{2r (I,\phi_1)}}\right)$, we write (\ref{step}) as
\begin{eqnarray}
F_{\phi_1[I]}= \left(\sin \phi_2\cos \varphi-\cos \phi_2\sin \varphi \right) \sin \omega \tau G_{\phi_1[I]}(I,\phi_1)\label{F}.
\end{eqnarray}
The vector field (\ref{sim_example_pert}) is time periodic with time period $T=\frac{2\pi}{\omega}$ and hence we can construct the Poincare map. Using the regular perturbation theory, the solutions of (\ref{sim_example_pert}) are $O(\epsilon)$ close to the unperturbed solutions on the time scale of $O(1)$, and hence can be written as
\begin{eqnarray*}
I^{\epsilon}(t)&=&I^0+\epsilon I^1(t)+O(\epsilon^2)\\
\phi_1^{\epsilon}(t)&=&\phi_1^0+\epsilon \phi_1^1(t)+O(\epsilon^2)\\
\phi_2^{\epsilon}(t)&=&\phi_2^0+\epsilon \phi_2^1(t)+O(\epsilon^2).\\
\end{eqnarray*}
Using the above perturbation expansion in $\epsilon$, the time period $T$ Poincare map can be written as
\begin{eqnarray*}
&P_{\epsilon}:(I^\epsilon(0), \phi_1^\epsilon(0), \phi_2^\epsilon(0))\to (I^\epsilon(T), \phi_1^{\epsilon}(T),\phi_2^\epsilon(T))\\
&(I^0,\phi_1^0,\phi_2^0)\to (I^0+\epsilon I^1(T),\phi_1^0+\epsilon \omega_1(I^0)T+\epsilon \phi_1^1(T),\phi_2^0+ \omega_2(I^0)T+\epsilon \phi_2^1(T))+O(\epsilon^2).
\end{eqnarray*} %
 From this Poincare map, we are interested in the perturbations terms of order $\epsilon$ entering in $I$ and $\phi_1$ directions (i.e., $I^1(T)$ and $\phi_1^1(T)$) and verifying that their average with respect to $\phi_2^0$ is zero thereby satisfying Assumption \ref{assumption_1} of the main theorem. The perturbation terms of $O(\epsilon^2)$ and their zero average with respect to $\phi_2^0$ is not necessary because the averaging Lemma \ref{averaging_lemma}, where the Assumption \ref{assumption_1} of the main theorem is used, only reduces the size of perturbations from order $\epsilon$ to $\epsilon^2$.
We have following expressions for $I^1(T)$ and $\phi_1^1(T)$
\begin{eqnarray*}
&I^1(T)=\int_0^T \sin (\phi^0_2+\omega_2(I^0)\tau)\cos \varphi \sin \omega \tau G_{I}(I^0,\phi_1^0)d\tau\nonumber\\&-\int_0^T \cos (\phi^0_2+\omega_2(I^0)\tau)\sin \varphi \sin \omega \tau G_{I}(I^0, \phi_1^0)dt=:f_I(I^0,\phi_1^0,\phi_2^0)\nonumber\\
&\phi_1^1(T)=\int_0^T \sin (\phi^0_2+\omega_2(I^0)\tau)\cos \varphi \sin \omega \tau G_{\phi_1}(I^0,\phi_1^0)d\tau\nonumber\\&-\int_0^T \cos (\phi^0_2+\omega_2(I^0)\tau)\sin \varphi \sin \omega \tau G_{\phi_1}(I^0, \phi_1^0)d\tau=:f_{\phi_1}(I^0,\phi_1^0,\phi_2^0).
\end{eqnarray*}
Using the trigonometric identities for $\sin(a+b)$ and $\cos(a+b)$, it follows that
\[\int_0^{2\pi}f_I(I^0,\phi_1^0,\phi_2^0) d\phi_2^0=\int_0^{2\pi}f_{\phi_2}(I^0,\phi_1^0,\phi_2^0) d\phi_2^0=0.\]
This verifies that the Poincare map of system (\ref{sim_example_pert}) satisfies the Assumption \ref{assumption_1} of the main Theorem.

 We pursue a visualization technique based on ergodic partition to visualize the dynamics of this three-dimensional map.
 The basic idea behind the constructing of the ergodic partition is
to identify the set of points in the phase space which have same time averages for a set of basis functions \cite{Mezic:1994,mezic_chaos,marko_cdc,Levnajic_chaos}. We pursue the implementation of this idea as presented in \cite{marko_cdc}. Ideally these time averages are computed for a basis set  functions  defined on the phase space.  We provide a computational implementation using only finitely many functions. Fig. \ref{ergodic_partition}, shows the two dimensional slice of the ergodic partition in the three dimensional $(r,z,\theta)$ space. The two dimensional slice is taken at $\theta=0$  plane. The initial conditions for the time averages are chosen from the set $I=[0.2,0.3]\times [-0.1,0.1]\times \{0\}$. The number of initial conditions for the simulation are chosen to be equal to $7000$ and the total number of functions used for time averages equal $8^3=512$ . The averaging functions were selected as the truncated set of complex harmonics functions on the rectangle $D=[0,0.5]\times [-1,1]\times [0,2\pi]$ and are of the form
\[f_{\bar k}(x)=(2\pi)^{-\frac{3}{2}}e^{i2\pi\left<x,\bar k\right>},\]
where $\bar k\in [0,7]^3$ so that in each spatial direction up to $8$ harmonics are considered, and $x=T(R,z,\theta)$, with $T: D\rightarrow [0,1]^3$ consists of translation and rescaling of domain $D$.
For more details on the computation of ergodic partition refer to \cite{ mezic_chaos,marko_cdc,Levnajic_chaos}. In Fig. \ref{ergodic_partition}, we show the results of the computation for values of perturbation $\epsilon=0.05$ and $\epsilon=0.01$.  Given the finite color resolution and the computation of time average with finitely many functions we can only resolve the ergodic partition to finite approximation. However even with the finite resolution one can identify the signature of the surviving KAM tori as
smooth banded structure of the invariant sets shown in figure \ref{ergodic_partition}.


\begin{figure}[h] \centering \subfigure[]
{\includegraphics[width=2.2in]{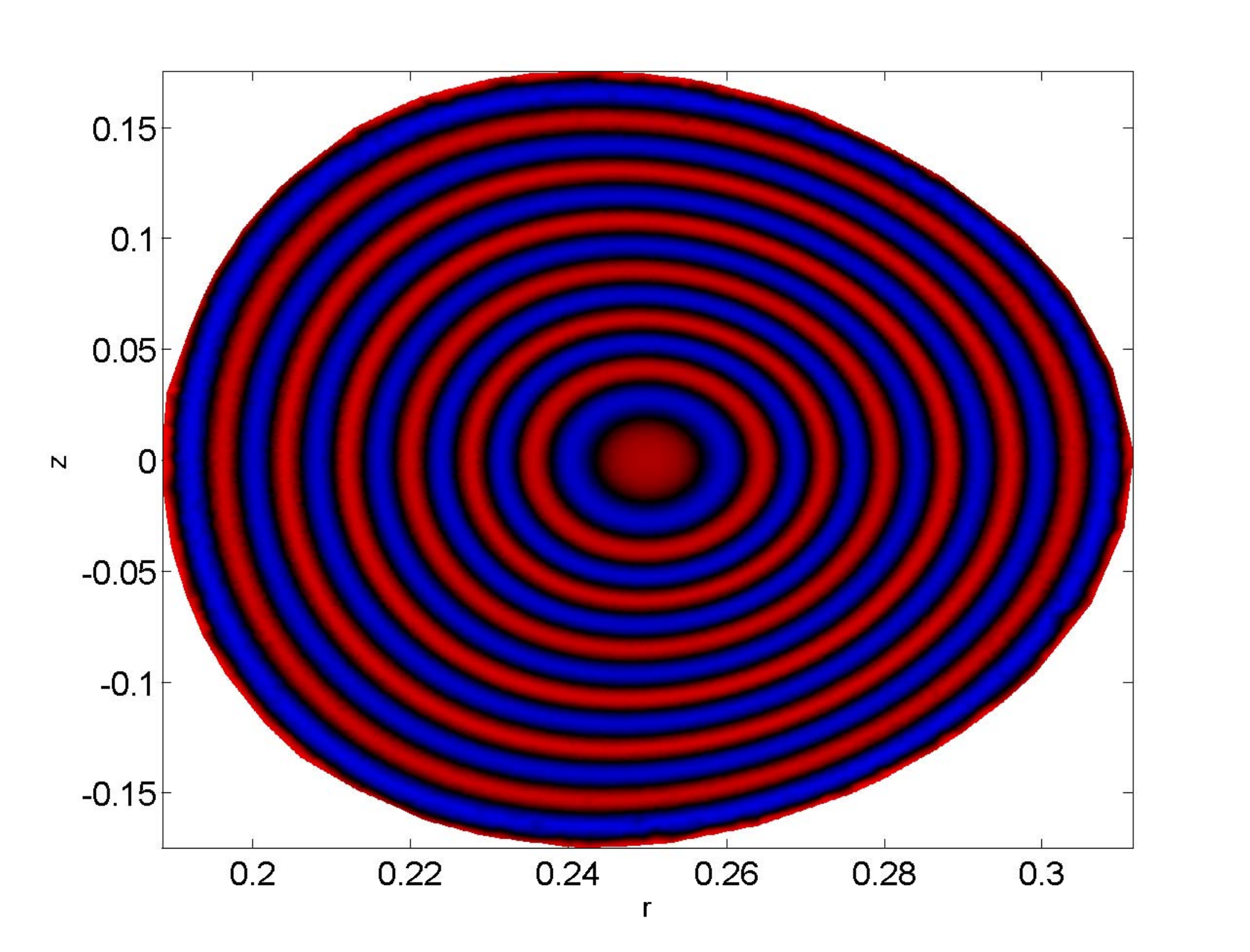}}
\hspace{.1in}
\subfigure[]
{\includegraphics[width=2.2in]{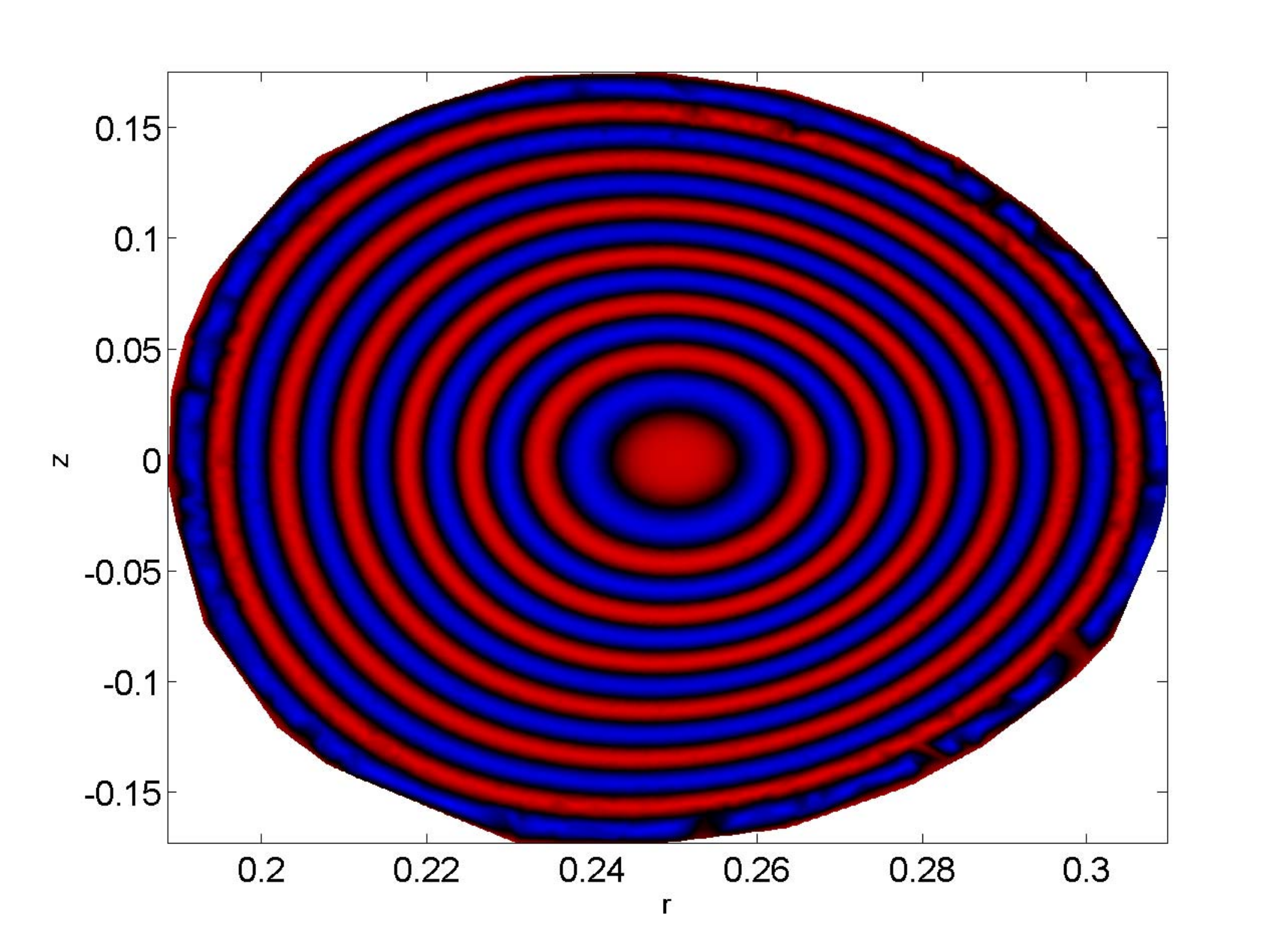}}
\caption{
\label{ergodic_partition} Visualization of ergodic partition on $\theta=0$ plane for the Poincare map of system (\ref{hills}); (a) $\epsilon= 0.05$; (b) $\epsilon=0.01$}\label{figure_tori}
\end{figure}

\section{Conclusions}
\label{conc}

In conclusion, we have proved the persistence of two-dimensional
invariant tori in the perturbation of integrable
action-angle-angle maps with degenerate angle. The persistence proof requires a
combination of the proof techniques for non-degenerate
volume-preserving maps as pursued in \cite{chengsun1} and
Arnold's methods in proving the KAM theorem in the case of Hamiltonian systems
with degenerate angles  \cite{arnoldsmallden, arnoldsmallden_2}. A specific peculiarity of our proof is the need for an intermediate sequence of coordinate transformations that reduces the size of the perturbation in the action variable by an order and allows us to proceed with a Moser-type technique pursued in  \cite{chengsun1}. Elegant and shorter proof technique for KAM-type results has recently been pursued by Broer, Huitema and Sevryuk \cite{broerbook}. It would be interesting to see whether their ``parametric" KAM-type technique could be used to prove a version of our theorem in a simpler way.
 In addition, we have used the main result of this paper to prove persistence of invariant tori in a perturbation  of a volume-preserving Euler fluid flow, swirling Hill's vortex, under the assumption of large swirl. Note that our proof above can be easily extended to the full class of perturbation similar to the single-mode perturbation in $\theta$ that we have used, as any such perturbation can be expanded in Fourier series. In other words, any sufficiently small, volume-preserving perturbation that has axial (z) symmetry will have a set of tori preserved.


\section{Acknowledgement}
The authors would like to acknowledge the help of Marko Budi\v{s}i\'{c}, from the University of California, Santa Barbara for providing the code and generating the plots for the ergodic partition of the three dimensional map in section \ref{Hill}. We thank an anonymous referee of the previous version for pointing out a problem with the proof. This research has been supported by ONR MURI grant.

%
%




%






\end{document}